\documentclass[a4paper,12pt]{article}
\usepackage{float,amsthm,amsmath,amssymb}
\usepackage{url,color}
\theoremstyle{plain}
\newtheorem{thm}{Theorem}
\newtheorem{prop}[thm]{Proposition}

\newtheorem{clm}[thm]{Claim}
\theoremstyle{definition}
\newtheorem{defn}[thm]{Definition}

\newtheorem{rem}{Remark}

\title{Characterization and chromatic number of triangle-free graphs with diameter 2}
\author{
Akihiro Higashitani\thanks{Osaka University, Osaka, Japan,
E-mail: {\tt higashitani@ist.osaka-u.ac.jp}},
Diogo Kendy Matsumoto\thanks{
Teikyo University of Science, Yamanashi, Japan,
E-mail: {\tt diogo-swm@ntu.ac.jp}
} 
and
Naoki Matsumoto\thanks{University of the Ryukyus, Okinawa, Japan, 
E-mail: {\tt naoki.matsumo10@gmail.com}}}
\date{} 

\begin{document}

\maketitle

\begin{abstract}
In this paper, we consider triangle-free graphs with diameter 2.
If a triangle-free graph $G$ with diameter 2 is not isomorphic to a star,
then the radius of $G$ is also 2,
where such a graph is also called a {\it $2$-self-centered graph}.
Shekarriz et al.~[A characterization for 2-self-centered graphs,
{\it Discuss. Math. Graph Theory} {\bf 38} (2018), 27--37.]
gave a characterization of 2-self-centered graphs.
However, there is a slight flaw in their characterization.
Thus, in this paper, we modify it and prove an accurate characterization of those graphs.
Furthermore,
by using our characterization,
we prove some results concerning the chromatic number of triangle-free graphs with diameter 2.
\end{abstract}

\noindent
{\bf Keywords:} 
Self-centered graph, Triangle-free, Chromatic number

\medskip
\noindent
{\bf AMS 2020 Mathematics Subject Classification:} 05C75, 05C12, 05C10.

\section{Introduction}

In this paper, we consider only finite simple graphs.
For a graph $G$ and a vertex $v$ of $G$,
the eccentricity of $v$ in $G$ is
the maximum graph distance between $v$ and any other vertex of $G$.
(If $G$ is disconnected, all vertices are defined to have infinite eccentricity.)
The \emph{diameter} (resp., \emph{radius}) of $G$ is 
the maximum (resp., minimum) eccentricity in $G$.

There are many interesting and important triangle-free graphs with diameter 2.
{\it Moore graphs} are the most famous family of such graphs,
where a Moore graph is a regular graph whose girth is more than twice of diameter.
The smallest Moore graph is the 5-cycle, 
and the next one is the Petersen graph.
Hoffman and Singleton~\cite{HS1960} proved that 
every graph with diameter 2 and girth 5 
is a $k$-regular graph with $k^2+1$ vertices,
where $k \in \{2,3,7,57\}$.
It is the most famous open problem
whether there exists a 57-regular graph with 3250 vertices, diameter 2 and girth 5.
Many studies have been conducted on triangle-free graphs with diameter 2,
even in recent years.
For example,
extremal problem~\cite{DKMCRVWW2025},
domination-like parameter~\cite{AK2024,CDMSS2023} and a special vertex coloring~\cite{FGO2021}.

It is easy to see that
a triangle-free graph with diameter 2 and radius 1 is only a star.
Thus, 
every triangle-free graph $G$ with diameter 2 has radius 2 unless $G$ is a star,
where such a graph is also called a triangle-free {\it $2$-self-centered graph}.
(In general, 
a graph whose diameter and radius are both $k$ is called a $k$-self-centered graph.)
Similarly to Moore graphs,
there are a number of studies on self-centered graphs;
see~\cite{AAA1984, Buckley1989, HM2018, IM2022, Janakiraman1994, NX1986, Stanic2010}.
In particular,
Shekarriz et al.~\cite{SMM2018} gave a characterization of triangle-free 2-self-centered graphs
in terms of a pair of coverings by independent sets.
However, their characterization needs some modification;
an $X$-generalized complete bipartite graph is not well defined,
which will be described in detailed in the next section (see Remark~\ref{rem:02}).
Thus, we modify them and prove the following,
where the definition of notations is introduced in the next section.

\begin{thm}[Theorem 13 in \cite{SMM2018}]\label{thm:ch}
A graph $G$ is a triangle-free $2$-self-centered graph
if and only if 
there are non-negative integers $k, \ell, r, s$ and a graph $X$
which has an SBIC via $(\mathbb{A}_r, \mathbb{B}_s)$
such that $G = GCB_X(k,\ell,\mathbb{A}_r, \mathbb{B}_s)$.
\end{thm}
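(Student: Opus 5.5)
We have not seen the definitions yet, so the plan has to guess their shape from Shekarriz et al. We take $X$ to be a graph on a vertex set partitioned into two families of independent sets, $\mathbb{A}_r=\{A_1,\dots,A_r\}$ and $\mathbb{B}_s=\{B_1,\dots,B_s\}$, which cover $V(X)$ in the prescribed way (the SBIC condition). We take $GCB_X(k,\ell,\mathbb{A}_r,\mathbb{B}_s)$ to be obtained from $X$ by adding two independent sets $P$ and $Q$. Here $P$ consists of $k$ ordinary vertices together with one vertex $a_i$ attached to each $A_i$, and $Q$ consists of $\ell$ ordinary vertices together with one $b_j$ attached to each $B_j$. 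All edges between $P$ and $Q$ are added, and each $a_i$ (resp.\ $b_j$) is joined to the vertices of $X$ outside $A_i$ (resp.\ $B_j$), or joined exactly to $A_i$, depending on the convention. The proof then splits into the two directions.

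\emph{Sufficiency.} Let $G=GCB_X(k,\ell,\mathbb{A}_r,\mathbb{B}_s)$ with $X$ having an SBIC via $(\mathbb{A}_r,\mathbb{B}_s)$.
\begin{enumerate}
\item Triangle-freeness: a triangle would have to use an edge inside $X$ (impossible by the independence and covering conditions in the SBIC), or a $P$--$Q$ edge together with a common neighbour in $X$. The second case is excluded because each vertex of $X$ is adjacent to vertices on only one side.
\item Diameter $2$: check every type of pair ($P$--$P$, $Q$--$Q$, $P$--$X$, $Q$--$X$, $X$--$X$), and in each case exhibit a common neighbour. The SBIC axioms are precisely what supply such common neighbours for pairs inside $X$.
\item Not a star: verify that no vertex is universal. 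This needs the nondegeneracy built into the definition, for example that $P$ and $Q$ each have at least two vertices when $X$ is empty.
\end{enumerate}

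\emph{Necessity.} Let $G$ be triangle-free and $2$-self-centered.
\begin{enumerate}
\item Choose an edge $uv$ and set $P=N(v)$ and $Q=N(u)$. These are disjoint independent sets by triangle-freeness, and every vertex of $P$ is adjacent to every vertex of $Q$? This fails in general. So I would instead choose a maximal complete bipartite subgraph $K$ with parts $P\ni u$ and $Q\ni v$, maximal with respect to inclusion, and let $X=G-V(K)$.
\item By triangle-freeness, each $x\in X$ has neighbours in at most one of $P$ and $Q$.
\item Because the diameter is $2$, each $x\in X$ has a neighbour in $P$ or in $Q$; since $x$ and $u$ need a common neighbour and $x$ is not adjacent to all of $Q$, $x$ must have neighbours in $P$ or $Q$.
\item Define $A_p=N(p)\cap V(X)$ for $p\in P$ and $B_q=N(q)\cap V(X)$ for $q\in Q$. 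These are independent sets. Vertices $p$ with $A_p=\emptyset$ form the $k$ plain vertices, and similarly for $\ell$, while the remaining ones form $\mathbb{A}_r$ and $\mathbb{B}_s$ (merging repeated sets if the definition requires distinct members).
\item Translate the diameter-$2$ conditions for pairs $x,y\in X$ and for pairs $x,p$ into the SBIC axioms. Then check that maximality of $K$ yields any "no redundant covering" clause, namely that no vertex of $X$ is adjacent to all of $P$.
\end{enumerate}

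The main obstacle is exactly the point flagged in Remark~\ref{rem:02}. The decomposition must be matched to a corrected, well-defined $X$-generalized complete bipartite construction, which includes edge cases such as $r=0$ or $s=0$ and coinciding sets. I would first fix the corrected definition so that the necessity construction above outputs it verbatim, and then verify that the SBIC axioms are equivalent, condition by condition, to the list of distance-$2$ requirements for pairs of vertices in $X$ and for pairs with one vertex in $X$ and one in $P\cup Q$.
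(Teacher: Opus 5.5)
There is a genuine gap, and it starts with the construction you guessed, which is not the one in Definition~\ref{defn:02}. There the two sides are $Y\cup L$ and $Z\cup K$, and they are \emph{not} completely joined. Instead, $K$ is complete to $L\cup Y$, $L$ is complete to $K\cup Z$, and $y_i\sim z_j$ if and only if $A_i\cap B_j=\emptyset$ (condition (6)). Both families cover $V(X)$ (SBIC (ii)), so every $x\in V(X)$ has neighbours on \emph{both} sides. Your triangle argument (``each vertex of $X$ sees only one side'') therefore contradicts the SBIC. A complete join would also create triangles $xy_iz_j$; it is rule (6) that excludes them. For the same reason your diameter check misses where the real work is:
$Y\times Z$ pairs need (6); $Y\times V(X)$ and $Z\times V(X)$ pairs need SBIC (iv)/(v); and $Y\times Y$, $Z\times Z$, $L\times Y$, $K\times Z$ pairs with $k=0$ or $\ell=0$ need the special clauses (7)--(9). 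Those clauses are exactly the correction discussed in Remark~\ref{rem:02}.

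The necessity direction fails for the same structural reason. Your step 3 is a non sequitur, since the common neighbour of $x$ and $u$ may lie in $X$. It is also false. Take the Gewirtz graph (strongly regular with parameters $(56,10,0,2)$) and a $4$-cycle $(\{p_1,p_2\},\{q_1,q_2\})$ in it; this is a maximal complete bipartite subgraph. Every outside vertex is adjacent to at most one of its four vertices, so only $32$ of the $52$ outside vertices have a neighbour in $P\cup Q$. Even granting step 3, step 2 makes $\bigcup_p A_p$ and $\bigcup_q B_q$ disjoint, so they cannot both equal $V(X)$.

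The missing idea is to use two independent sets rather than a complete bipartite subgraph:
\begin{itemize}
\item[(a)] Take $Y'$ maximal independent in $G$, $Z'$ maximal independent in $G-Y'$, and $X=G-(Y'\cup Z')$. Maximality gives each $x\in V(X)$ neighbours in both $Y'$ and $Z'$.
\item[(b)] Let $K$ (resp.\ $L$) be the vertices of $Z'$ (resp.\ $Y'$) with no neighbour in $X$.
\item[(c)] Any common neighbour of a vertex of $Y'$ and a vertex of $Z'$ must lie in $X$. Hence diameter $2$ forces $K$ complete to $Y'$ and $L$ complete to $Z'$. Together with triangle-freeness it also gives $y_i\sim z_j$ if and only if $N_X(y_i)\cap N_X(z_j)=\emptyset$.
\item[(d)] With $A_i=N_X(y_i)$ and $B_j=N_X(z_j)$, the SBIC conditions (iii)--(v) and the special cases (7)--(11) follow from diameter $2$ and the absence of a universal vertex.
\end{itemize}
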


We also have the following result on $2$-self-centered graphs with girth 5,
i.e., Moore graphs.

\begin{prop}\label{prop:girth5}
Let $G$ be a $2$-self-centered graph of girth $5$.
Then there are non-negative integers $r, s$ and a graph $X$
which has an SBIC via $(\mathbb{A}_r, \mathbb{B}_s)$
such that $G = GCB_X(1,1,\mathbb{A}_r, \mathbb{B}_s)$.
\end{prop}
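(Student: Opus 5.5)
The plan is to reuse the structure behind Theorem~\ref{thm:ch} rather than build a new decomposition. A girth-$5$ graph is triangle-free, so Theorem~\ref{thm:ch} applies to $G$ and gives $k,\ell,r,s$ and a graph $X$ with an SBIC via $(\mathbb{A}_r,\mathbb{B}_s)$ such that $G=GCB_X(k,\ell,\mathbb{A}_r,\mathbb{B}_s)$. It then suffices to show that girth $5$ forces, or at least allows us to choose, $k=\ell=1$.

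I read the parameters $k$ and $\ell$ as the sizes of the two sides $K,L$ of the complete bipartite ``core'' $K_{k,\ell}$ of $GCB_X$. The vertices of $X$ are attached to this core according to the coverings $\mathbb{A}_r$ and $\mathbb{B}_s$.

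\begin{enumerate}
\item \emph{Rule out a large core.} If $k\ge 2$ and $\ell\ge 2$, then two vertices of $K$ and two vertices of $L$ span a $4$-cycle. This contradicts girth $5$, so $\min\{k,\ell\}=1$, and by symmetry we may take $k=1$.
\item \emph{Reduce $\ell$ to $1$.} Suppose $K=\{u\}$ and $L=\{v_1,\dots,v_\ell\}$ with $\ell\ge 2$. I want to move $v_2,\dots,v_\ell$ out of the core and into $X$. Each moved vertex $v_i$ is adjacent to $u$ and has no neighbour in $L$, so it should be absorbed by enlarging one member of the covering on the $u$-side, or by adding a new independent set to it, thereby raising $r$ or $s$.
\item \emph{Check the new data.} Verify that $G=GCB_X(1,1,\mathbb{A}_{r'},\mathbb{B}_{s'})$ for the modified graph and coverings, and that the modified pair of coverings is still an SBIC. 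The key tool is the girth condition: two nonadjacent vertices of $G$ have exactly one common neighbour (at least one by diameter $2$, at most one since there is no $C_4$). This should give precisely the disjointness and cross-intersection properties the SBIC definition requires.
\end{enumerate}

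A cleaner alternative is to go back to the proof of Theorem~\ref{thm:ch}. If the core there is produced from a choice made in $G$ (an edge $uv$ together with the sets of vertices complete to the opposite endpoint), I would fix an arbitrary edge $uv$ and rerun that construction. In girth $5$, $N(u)\cap N(v)=\emptyset$ and no other vertex is complete to both sides of the edge, so the core produced is exactly $\{u\}\cup\{v\}$. The rest of the decomposition, with $X=G-\{u,v\}$ and the coverings given by $N(u)\setminus\{v\}$, $N(v)\setminus\{u\}$ and the independent neighbourhoods inside $X$, goes through verbatim. The girth only enters to certify $k=\ell=1$.

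The main obstacle is the bookkeeping in step~2, or equivalently confirming that the proof of Theorem~\ref{thm:ch} really permits an arbitrary edge as the core. The SBIC conditions must survive the transfer of vertices from the core into $X$. I would first try the edge-based route, checking each SBIC axiom with the unique-common-neighbour property. Only if that failed would I fall back on modifying the coverings directly.
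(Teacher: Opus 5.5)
Once repaired, your ``cleaner alternative'' is exactly the paper's argument: fix an edge $ab$, put $K=\{a\}$, $L=\{b\}$, $Y=N_G(a)\setminus\{b\}$ and $Z=N_G(b)\setminus\{a\}$, let $X$ be induced on the remaining vertices, and verify (1)--(6) and the SBIC axioms directly. As written, though, the proposal does not get there. Your primary route fails at Step~1. Excluding $k,\ell\ge 2$ only gives $\min\{k,\ell\}\le 1$, and $k=0$ genuinely occurs for decompositions supplied by Theorem~\ref{thm:ch}. In the Petersen graph, an independent $4$-set $Y'$ leaves $G-Y'\cong 3K_2$, so every vertex of $Z'$ has its matching partner in $X$ and $K=\emptyset$. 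Nothing in your plan handles this. Meanwhile Step~2 treats a case that cannot occur: if $k=1$ and $\ell\ge 2$, any $z\in Z$ closes the $4$-cycle $u v_1 z v_2$. If instead $Z=\emptyset$, then $r=s=0$ by Remark~\ref{rem:01}, which forces $k\ge 2$ by (10).

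In the alternative route, $X=G-\{u,v\}$ is wrong. The sets $N_G(u)\setminus\{v\}$ and $N_G(v)\setminus\{u\}$ are the parts $Y$ and $Z$ and must be deleted too, so $V(X)=V(G)\setminus(N_G(u)\cup N_G(v))$; otherwise the partition in (1) fails. Also, ``goes through verbatim'' rests on a wrong guess about the proof of Theorem~\ref{thm:ch}. That proof involves no edge: it starts from an arbitrary maximal independent set $Y'$ and a maximal independent set $Z'$ of $G-Y'$, and takes $L$ (resp.\ $K$) to be the vertices of $Y'$ (resp.\ $Z'$) with no neighbour in $X$.

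You can close the gap in two ways. First, note that $Y'=N_G(u)$ is a maximal independent set of $G$ and $Z'=N_G(v)$ is one of $G-N_G(u)$, so that proof applies. Then show $L=\{v\}$ and $K=\{u\}$. The reason is not that ``no other vertex is complete to both sides''. Rather, each $y\in N_G(u)\setminus\{v\}$ has degree at least $2$ (a $2$-self-centered graph has no leaf), no neighbour in $N_G(u)$, and no neighbour in $N_G(v)\setminus\{u\}$ (no $C_4$), hence a neighbour in $X$; symmetrically for $Z$. Then (7)--(9) are vacuous, and (10), (11) hold since $r,s\ge 1$ and $k=\ell=1$.

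Second, you can check the axioms directly, as the paper does.
\begin{itemize}
\item[(6)] Girth $5$ gives $y_i\not\sim z_j$ for all $i,j$, so their common neighbour lies in $X$ and (6) holds.
\item[(ii)] Every $x\in V(X)$ is at distance $2$ from both $u$ and $v$.
\item[(iii)] This follows from diameter $2$, since the middle vertex cannot be $u$ or $v$.
\item[(iv),(v)] These are vacuous: $N_G(y_i)=\{u\}\cup A_i$, so $d_G(x,y_i)\le 2$ forces $d_X(x,A_i)\le 1$ for every $x\in V(X)$.
\end{itemize}
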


Moreover, 
by using our correct characterization,
we characterize triangle-free {\it planar} 2-self-centered graphs,
as follows.

\begin{thm}\label{thm:planar}
A triangle-free $2$-self-centered graph $G$ is planar 
if and only if 
$G$ is isomorphic to one of the following:
\begin{itemize}
\item[(1)] A complete bipartite graph $K_{2,d}$ for $d \ge 2$.
\item[(2)] A \emph{blow-up} of $C_5 = u_0u_1u_2u_3u_4$ at $\{u_1,u_3\}$, 
that is,
$u_1$ and $u_3$ are replaced with independent sets $V_1$ and $V_3$ with size at least one
so that each vertex in $V_i$ is adjacent to both $u_{i-1}$ and $u_{i+1}$
for each $i \in \{1,3\}$ (see Figure~$\ref{fig:blowup_c5}$).
\end{itemize}
\end{thm}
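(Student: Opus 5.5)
The plan is to argue directly from the graph, without going through the $GCB_X$ structure. The only global facts used are that $G$ is triangle-free with diameter 2, so every neighborhood $N(v)$ is independent and every vertex outside $N[v]$ has a neighbor in $N(v)$, and that $G$ is not a star. Euler's formula for triangle-free planar graphs, $|E(G)| \le 2|V(G)|-4$, will serve as a sanity check. The ``if'' direction is a direct verification.
\begin{itemize}
\item $K_{2,d}$ with $d\ge 2$ is planar, bipartite and not a star, and any two vertices are at distance at most 2.
\item The blow-up of $C_5$ at $\{u_1,u_3\}$ is triangle-free: its edges are $u_0V_1$, $V_1u_2$, $u_2V_3$, $V_3u_4$ and $u_4u_0$, and there is no triangle among them.
\item It has diameter 2: $V_1$ and $V_3$ share the common neighbor $u_2$, $u_0$ reaches $V_3$ through $u_4$, $u_2$ reaches $u_4$ through $V_3$, and so on.
\item It is planar: draw $u_0,u_2,u_4$, put each vertex of $V_1$ on its own path $u_0$--$x$--$u_2$ nested inside a face, and do the same for $V_3$ between $u_2$ and $u_4$.
\end{itemize}

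For the ``only if'' direction, fix a vertex $v$ of maximum degree $d$ and set $N=N(v)$ and $R=V(G)\setminus N[v]$. The set $N$ is independent, and each $w\in R$ has a neighbor in $N$; since $G$ is not a star and has radius 2, $R\neq\emptyset$. Contract $v$ together with $N$ into a single vertex. If some vertex $w\in R$ had three neighbors $a,b,c\in N$, and another vertex $w'\in R$ had two of those same neighbors, then $v$, $w$, $w'$ and $a,b,c$ would contain a $K_{3,3}$ or a large subdivision. The planar constraint I want to derive is that the vertices of $R$ together with $N$ form a structure in which at most two vertices of $N$ have ``many'' common neighbors in $R$. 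Concretely, I will show $|N(w)\cap N(w')|\le 2$ behaviour, and in fact reduce to the following two cases.

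Case 1: some vertex $w \in R$ is adjacent to all of $N$. Then $R$ must be independent: an edge $ww'$ inside $R$, together with any neighbor of $w'$ in $N$, would create a triangle, so $w$ has no neighbor in $R$. For any other $w'\in R$ with $N(w')\cap N \neq N$, pick $x\in N\setminus N(w')$. The distance from $x$ to $w'$ must be 2 through some vertex $y$, and $y$ must lie in $R$. This forces edges inside $R$, and I expect to push this to a $K_{3,3}$ minor unless $|R|=1$ or $d=2$. Once the structure is forced to be complete bipartite $K_{|R|+1,d}$, planarity gives $\min\le 2$, which yields $K_{2,d}$. The subcase in which $R$ contains edges is where the blow-up of $C_5$ arises.

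Case 2: no vertex of $R$ dominates $N$. Use diameter 2 between pairs of vertices of $N$ (common neighbors lie in $\{v\}\cup R$) and between $v$ and $R$. By maximality of $d$, $R$ must contain edges, and the aim is to extract a $C_5$ $v\,a\,w\,w'\,b$ and to show that every further vertex is a twin of $a$ or $w'$ (or the symmetric choice). Any other configuration should produce a $K_{3,3}$ or $K_5$ minor: typically take $v$, two vertices of $R$ and three vertices of $N$ or of $R$ as the branch sets. The twins accumulate into $V_1$ and $V_3$, which gives the blow-up.

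The main obstacle is this Case 2 minor-hunting, namely controlling arbitrary edges inside $R$. I would first try to show that the degree bound from Euler's formula, combined with the fact that $N(u)$ must meet every neighborhood structure at distance 2, forces $G$ to have at most three vertices of degree $\ge 3$. The graph is then a subdivision-like structure on those vertices, and a short case analysis finishes the argument.
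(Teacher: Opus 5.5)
The ``if'' direction is fine. Layering around a vertex $v$ of maximum degree ($N=N(v)$, $R=V(G)\setminus N[v]$) is also a sound framework, and it would bypass Theorem~\ref{thm:ch} and the paper's eleven positions of a $5$-cycle. But the ``only if'' direction is not proved. Case~2 is the only case in which a non-bipartite graph, i.e.\ a blow-up of $C_5$, can occur, and there you offer only intentions (``the aim is'', ``should produce'', ``typically''). The fallback through Euler's formula does not work. The bound $|E(G)|\le 2|V(G)|-4$ only keeps the average degree below $4$ and allows arbitrarily many vertices of degree $3$. So ``at most three vertices of degree $\ge 3$'' would have to come from diameter $2$ plus explicit $K_{3,3}$-minors, which is exactly the missing work. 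Case~1 also contains errors: your argument shows only that $w$ has no neighbour in $R$, not that $R$ is independent, and no blow-up of $C_5$ arises there. In fact Case~1 closes quickly. If $d\ge 3$, any $w'\in R\setminus\{w\}$ together with at most two of its neighbours in $R\setminus\{w\}$ forms a connected set adjacent to three vertices of $N$; with $\{v\}$ and $\{w\}$ this gives a $K_{3,3}$-minor. If $d=2$, then $G=C_4$. So Case~1 yields exactly $K_{2,d}$.

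What Case~2 needs is a precise target and a proof of it. Write $S_u=N(u)\cap N$ for $u\in R$. The target is that $G[R]$ is a single edge $ww'$ with $N=S_w\sqcup S_{w'}$, which is exactly the blow-up with $v$ as the middle vertex $u_2$. First, every $u\in R$ has a neighbour in $R$: it misses some $a\in N$, and a common neighbour of $u$ and $a$ must lie in $R$. This uses the Case~2 hypothesis, not maximality of $d$. Then two lemmas are needed.

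(a) For every edge $ww'$ of $G[R]$, $N=S_w\cup S_{w'}$. Otherwise take $t\in N\setminus(S_w\cup S_{w'})$, common neighbours $w_1\in R$ of $w,t$ and $w_2\in R$ of $w',t$, vertices $a\in S_w$, $a'\in S_{w'}$, and a shortest $w_2$--$a$ path $P$. Triangle-freeness makes $\{v,a'\}$, $\{w,w_1\}$, $V(P)\setminus\{a\}$ pairwise disjoint and disjoint from $\{a\},\{t\},\{w'\}$, giving a $K_{3,3}$-minor.

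(b) There is no $x\in R\setminus\{w,w'\}$. If $x\sim w$ (or symmetrically $x\sim w'$), then (a) forces $S_x=S_{w'}$. Either $|S_{w'}|\ge 2$, and $\{v\},\{w'\},\{x\}$ against two vertices of $S_{w'}$ and $\{w,a\}$ give a $K_{3,3}$-minor; or $|S_{w'}|=1$, and $\deg(w)\ge |S_w|+2>d$. This is the one place where maximality of $\deg(v)$ is essential, since such an $x$ produces a genuine blow-up centred at $w$. Hence no vertex of $R\setminus\{w,w'\}$ is adjacent to $w$ or $w'$. So for any such $x$ and an $R$-neighbour $x'$ of $x$, the common neighbours with $w$ and $w'$ lie in $N$. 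Pick $p\in S_x\cap S_w$, $q\in S_x\cap S_{w'}$ and $p'\in S_{x'}\cap S_w$; these are distinct, as $S_w\cap S_{w'}=S_x\cap S_{x'}=\emptyset$. Then $\{v\},\{w,w'\},\{x,x'\}$ against $\{p\},\{q\},\{p'\}$ is a $K_{3,3}$-minor.

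None of this appears in your proposal, so as written it establishes only the $K_{2,d}$ half of the theorem.
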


\begin{figure}[htb]
\centering
\unitlength 0.1in
\begin{picture}( 34.6000, 14.8000)( 13.7500,-21.5000)
%
\special{pn 8}%
\special{sh 1.000}%
\special{ar 1600 1000 40 40  0.0000000 6.2831853}%
%
\special{pn 8}%
\special{sh 1.000}%
\special{ar 1600 1800 40 40  0.0000000 6.2831853}%
%
\special{pn 8}%
\special{sh 1.000}%
\special{ar 2200 1000 40 40  0.0000000 6.2831853}%
%
\special{pn 8}%
\special{sh 1.000}%
\special{ar 2200 1800 40 40  0.0000000 6.2831853}%
%
\special{pn 8}%
\special{sh 1.000}%
\special{ar 2600 1400 40 40  0.0000000 6.2831853}%
%
\special{pn 8}%
\special{pa 1600 1800}%
\special{pa 1600 1000}%
\special{fp}%
%
\special{pn 8}%
\special{pa 1600 1000}%
\special{pa 2200 1000}%
\special{fp}%
%
\special{pn 8}%
\special{pa 2200 1000}%
\special{pa 2600 1400}%
\special{fp}%
%
\special{pn 8}%
\special{pa 2600 1400}%
\special{pa 2200 1800}%
\special{fp}%
%
\special{pn 8}%
\special{pa 2200 1800}%
\special{pa 1600 1800}%
\special{fp}%
%
\special{pn 8}%
\special{pa 3000 1400}%
\special{pa 3400 1400}%
\special{fp}%
\special{sh 1}%
\special{pa 3400 1400}%
\special{pa 3334 1380}%
\special{pa 3348 1400}%
\special{pa 3334 1420}%
\special{pa 3400 1400}%
\special{fp}%
\put(16.0000,-8.7000){\makebox(0,0){$u_0$}}%
\put(22.0000,-8.7000){\makebox(0,0){$u_1$}}%
\put(26.0000,-12.7000){\makebox(0,0){$u_2$}}%
\put(22.0000,-19.2000){\makebox(0,0){$u_3$}}%
\put(16.0000,-19.2000){\makebox(0,0){$u_4$}}%
%
\special{pn 8}%
\special{sh 1.000}%
\special{ar 3796 1006 40 40  0.0000000 6.2831853}%
%
\special{pn 8}%
\special{sh 1.000}%
\special{ar 3796 1806 40 40  0.0000000 6.2831853}%
%
\special{pn 8}%
\special{sh 1.000}%
\special{ar 4396 1006 40 40  0.0000000 6.2831853}%
%
\special{pn 8}%
\special{sh 1.000}%
\special{ar 4796 1406 40 40  0.0000000 6.2831853}%
%
\special{pn 8}%
\special{pa 3796 1806}%
\special{pa 3796 1006}%
\special{fp}%
%
\special{pn 8}%
\special{pa 3796 1006}%
\special{pa 4396 1006}%
\special{fp}%
%
\special{pn 8}%
\special{pa 4396 1006}%
\special{pa 4796 1406}%
\special{fp}%
\put(37.9500,-8.7500){\makebox(0,0){$u_0$}}%
\put(49.7000,-14.1000){\makebox(0,0){$u_2$}}%
\put(37.9500,-19.2500){\makebox(0,0){$u_4$}}%
%
\special{pn 8}%
\special{sh 1.000}%
\special{ar 4400 2010 40 40  0.0000000 6.2831853}%
%
\special{pn 8}%
\special{sh 1.000}%
\special{ar 4400 1610 40 40  0.0000000 6.2831853}%
%
\special{pn 8}%
\special{sh 1.000}%
\special{ar 4400 1210 40 40  0.0000000 6.2831853}%
%
\special{pn 8}%
\special{pa 4400 810}%
\special{pa 4800 1410}%
\special{fp}%
%
\special{pn 8}%
\special{pa 4400 810}%
\special{pa 3800 1010}%
\special{fp}%
%
\special{pn 8}%
\special{pa 3800 1010}%
\special{pa 4400 1210}%
\special{fp}%
%
\special{pn 8}%
\special{pa 4400 1210}%
\special{pa 4800 1410}%
\special{fp}%
%
\special{pn 8}%
\special{pa 4800 1410}%
\special{pa 4400 1610}%
\special{fp}%
%
\special{pn 8}%
\special{pa 4400 1610}%
\special{pa 3800 1810}%
\special{fp}%
%
\special{pn 8}%
\special{pa 3800 1810}%
\special{pa 4400 2010}%
\special{fp}%
%
\special{pn 8}%
\special{pa 4400 2010}%
\special{pa 4800 1410}%
\special{fp}%
%
\special{pn 8}%
\special{sh 1.000}%
\special{ar 4400 810 40 40  0.0000000 6.2831853}%
%
\special{pn 8}%
\special{ar 4400 1010 150 340  0.0000000 0.0489796}%
\special{ar 4400 1010 150 340  0.1959184 0.2448980}%
\special{ar 4400 1010 150 340  0.3918367 0.4408163}%
\special{ar 4400 1010 150 340  0.5877551 0.6367347}%
\special{ar 4400 1010 150 340  0.7836735 0.8326531}%
\special{ar 4400 1010 150 340  0.9795918 1.0285714}%
\special{ar 4400 1010 150 340  1.1755102 1.2244898}%
\special{ar 4400 1010 150 340  1.3714286 1.4204082}%
\special{ar 4400 1010 150 340  1.5673469 1.6163265}%
\special{ar 4400 1010 150 340  1.7632653 1.8122449}%
\special{ar 4400 1010 150 340  1.9591837 2.0081633}%
\special{ar 4400 1010 150 340  2.1551020 2.2040816}%
\special{ar 4400 1010 150 340  2.3510204 2.4000000}%
\special{ar 4400 1010 150 340  2.5469388 2.5959184}%
\special{ar 4400 1010 150 340  2.7428571 2.7918367}%
\special{ar 4400 1010 150 340  2.9387755 2.9877551}%
\special{ar 4400 1010 150 340  3.1346939 3.1836735}%
\special{ar 4400 1010 150 340  3.3306122 3.3795918}%
\special{ar 4400 1010 150 340  3.5265306 3.5755102}%
\special{ar 4400 1010 150 340  3.7224490 3.7714286}%
\special{ar 4400 1010 150 340  3.9183673 3.9673469}%
\special{ar 4400 1010 150 340  4.1142857 4.1632653}%
\special{ar 4400 1010 150 340  4.3102041 4.3591837}%
\special{ar 4400 1010 150 340  4.5061224 4.5551020}%
\special{ar 4400 1010 150 340  4.7020408 4.7510204}%
\special{ar 4400 1010 150 340  4.8979592 4.9469388}%
\special{ar 4400 1010 150 340  5.0938776 5.1428571}%
\special{ar 4400 1010 150 340  5.2897959 5.3387755}%
\special{ar 4400 1010 150 340  5.4857143 5.5346939}%
\special{ar 4400 1010 150 340  5.6816327 5.7306122}%
\special{ar 4400 1010 150 340  5.8775510 5.9265306}%
\special{ar 4400 1010 150 340  6.0734694 6.1224490}%
\special{ar 4400 1010 150 340  6.2693878 6.2832853}%
%
\special{pn 8}%
\special{ar 4400 1810 150 340  0.0000000 0.0489796}%
\special{ar 4400 1810 150 340  0.1959184 0.2448980}%
\special{ar 4400 1810 150 340  0.3918367 0.4408163}%
\special{ar 4400 1810 150 340  0.5877551 0.6367347}%
\special{ar 4400 1810 150 340  0.7836735 0.8326531}%
\special{ar 4400 1810 150 340  0.9795918 1.0285714}%
\special{ar 4400 1810 150 340  1.1755102 1.2244898}%
\special{ar 4400 1810 150 340  1.3714286 1.4204082}%
\special{ar 4400 1810 150 340  1.5673469 1.6163265}%
\special{ar 4400 1810 150 340  1.7632653 1.8122449}%
\special{ar 4400 1810 150 340  1.9591837 2.0081633}%
\special{ar 4400 1810 150 340  2.1551020 2.2040816}%
\special{ar 4400 1810 150 340  2.3510204 2.4000000}%
\special{ar 4400 1810 150 340  2.5469388 2.5959184}%
\special{ar 4400 1810 150 340  2.7428571 2.7918367}%
\special{ar 4400 1810 150 340  2.9387755 2.9877551}%
\special{ar 4400 1810 150 340  3.1346939 3.1836735}%
\special{ar 4400 1810 150 340  3.3306122 3.3795918}%
\special{ar 4400 1810 150 340  3.5265306 3.5755102}%
\special{ar 4400 1810 150 340  3.7224490 3.7714286}%
\special{ar 4400 1810 150 340  3.9183673 3.9673469}%
\special{ar 4400 1810 150 340  4.1142857 4.1632653}%
\special{ar 4400 1810 150 340  4.3102041 4.3591837}%
\special{ar 4400 1810 150 340  4.5061224 4.5551020}%
\special{ar 4400 1810 150 340  4.7020408 4.7510204}%
\special{ar 4400 1810 150 340  4.8979592 4.9469388}%
\special{ar 4400 1810 150 340  5.0938776 5.1428571}%
\special{ar 4400 1810 150 340  5.2897959 5.3387755}%
\special{ar 4400 1810 150 340  5.4857143 5.5346939}%
\special{ar 4400 1810 150 340  5.6816327 5.7306122}%
\special{ar 4400 1810 150 340  5.8775510 5.9265306}%
\special{ar 4400 1810 150 340  6.0734694 6.1224490}%
\special{ar 4400 1810 150 340  6.2693878 6.2832853}%
\put(46.7000,-9.0000){\makebox(0,0){$V_1$}}%
\put(46.7000,-19.0000){\makebox(0,0){$V_3$}}%
\end{picture}%
\caption{A blow-up of $C_5 = u_0u_1u_2u_3u_4$}
\label{fig:blowup_c5}
\end{figure}

We investigate the chromatic number of 2-self-centered graphs 
(note that the chromatic number of a star is 2).
It is easy to check that 
for any integer $k \ge 2$,
there is a triangle-free $2$-self-centered graph with chromatic number exactly $k$
(see Proposition~\ref{prop:chro_exist}).
Moreover, we have the following remark by the definition of a $2$-self-centered graph.

\begin{rem}\label{rem:bip}
A graph $G$ is a bipartite $2$-self-centered graph
if and only if $G$ is isomorphic to a complete bipartite graph 
$K_{a,b}$ with $a,b \ge 2$.
\end{rem}

On the other hand, 
for every triangle-free 2-self-centered graph 
$G = GCB_X(k, \ell, \mathbb{A}_r, \mathbb{B}_s)$,
the inequalities $\chi(X) \le \chi(G) \le \chi(X)+2$ hold since $G-X$ is bipartite,
where $\chi(H)$ denotes the chromatic number of a graph $H$.
For each chromatic number within this range,
we can construct a 2-self-centered graph with that chromatic number
except the case $\chi(X)=2$, as follows.
(For the exceptional case in the following theorem, 
see Remark~\ref{rem:03} after the proof.
Note that our construction is not Mycielski's construction.)

\begin{thm}\label{prop:color1}
For any integers $p \ge 2$ and $q \in \{0,1,2\}$,
there is a triangle-free $2$-self-centered graph 
$G = GCB_X(k, \ell, \mathbb{A}_r, \mathbb{B}_s)$
such that $\chi(G) = \chi(X) + q$ with $\chi(X) = p$,
unless $p=2$ and $q=0$.
\end{thm}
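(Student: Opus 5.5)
The plan is to prove Theorem~\ref{prop:color1} by an explicit construction for each pair $(p,q)$. Membership in the class will be checked through Theorem~\ref{thm:ch}: it is enough to exhibit $X$ together with an SBIC $(\mathbb{A}_r,\mathbb{B}_s)$ and parameters $k,\ell$. The value of $\chi$ will then be computed directly. The upper bound $\chi(G)\le\chi(X)+2$ and the lower bound $\chi(G)\ge\chi(X)$ are already known, because $X\subseteq G$ and $G-X$ is bipartite. So in each case only one inequality remains: an explicit colouring with $p+q$ colours, and a proof that $p+q-1$ colours do not suffice.

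I will use the following structure of $GCB_X$: $G-X$ splits into a side attached to the sets $A_i$ and a side attached to the sets $B_j$, and these two sides are complete to each other. The key observation is then the following. In any proper colouring of $G$, the colours used on the $A$-side are disjoint from those used on the $B$-side. Moreover, a vertex $a_i$ attached to $A_i$ may reuse a colour of $X$ only if that colour does not occur on $A_i$.

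\textbf{Case $q=0$, $p\ge 3$.} Take a triangle-free $X$ with $\chi(X)=p$, for example a Mycielski graph, and fix an optimal colouring $c$ of $X$. Choose the covering so that the sets $A_i$ avoid colour $1$ and the sets $B_j$ avoid colour $2$; for instance, the $A_i$ could be suitable pieces of the other colour classes. Then the whole $A$-side can be coloured $1$ and the whole $B$-side coloured $2$, giving $\chi(G)=p$. The case $p=2$ is excluded precisely because then $X$ is bipartite and no such room is available; cf.\ Remark~\ref{rem:bip}. I must check that the chosen covering still satisfies the SBIC axioms, in particular that every vertex of $X$ is covered and that the diameter-$2$ conditions hold.

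\textbf{Cases $q=1$ and $q=2$.} Here I would make the covering ``rich'' on one or both sides, so that colours of $X$ are blocked. For $q=2$, take the $A$-side to contain, for every $p$-colouring pattern, vertices whose neighbourhoods together meet every colour class. The natural first attempt is to let $\mathbb{A}_r$ include enough independent sets that every colour among $p+1$ available ones is forced onto $X$-vertices adjacent to the $A$-side, and to do the same for $\mathbb{B}_s$.

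The lower bound is argued as follows. Suppose $G$ had a $(p+1)$-colouring. The disjointness of the colour sets $S_A$ and $S_B$ means at most one new colour can be shared out. Hence one side, say the $A$-side, uses only colours that also appear on $X$. The richness of $\mathbb{A}_r$ then forces, for some $a_i$, that $A_i$ meets every colour class of the induced colouring of $X$. This is a contradiction. The case $q=1$ uses richness on only one side, with the other side kept colourable by a colour absent from its neighbourhoods, as in the case $q=0$.

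The main obstacle will be the case $q=2$: making the richness argument rigorous uniformly over all $(p+1)$-colourings of $X$, not just the intended one, while keeping the coverings valid for an SBIC. If the general argument is messy, I would try first a small concrete base, such as $X=C_5$ for $p=3$, and then lift it to larger $p$ by taking disjoint unions or joins of bipartite pieces that raise $\chi(X)$ without creating triangles.
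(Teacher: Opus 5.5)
There are genuine gaps, starting with the structural fact the rest of your argument rests on.

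\textbf{The two sides of $G-X$ are not complete to each other.} By condition (6) of Definition~\ref{defn:02}, $y_i\sim z_j$ only when $A_i\cap B_j=\emptyset$. In a girth-$5$ example such as the Petersen graph $GCB_X(1,1,\mathbb{A}_2,\mathbb{B}_2)$ there are no $Y$--$Z$ edges at all. The only complete joins are $K$ to $L\cup Y$ and $L$ to $K\cup Z$. So the disjointness of the colour sets on the two sides is unfounded, and with it the step ``at most one new colour can be shared out'' in your lower bound for $q=1,2$.

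\textbf{Your case $q=0$ fails, exactly at the check you postpone.} Axiom (ii) demands $\bigcup_i A_i=V(X)$. In any $p$-colouring of $G$ every colour occurs on $X$, because $\chi(X)=p$. So some $A_i$ contains a vertex of colour $1$, and $y_i$ cannot be coloured $1$. Different vertices of $Y$ must therefore receive different colours, drawn from a palette disjoint from the one used on $Z$. The paper does this for $p\ge5$ as follows: $\mathbb{A}_r$ is all pairs at distance $\ge3$ together with all singletons, and $\mathbb{B}_s$ is all singletons. It colours $Y$ from $\{3,4,5\}$ (each $A_i$ has at most two vertices), $Z$ from $\{1,2\}$, $K$ with $1$ and $L$ with $3$. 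For $p=3,4$ it gives explicit examples: $X=C_5$ with $\{u_0,u_2\},\{u_1,u_3\},\{u_2,u_4\}$, and the Gr\"otzsch graph with its colour classes.

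\textbf{The case $q=1,2$ has no construction.} You never define a ``rich'' covering or verify axioms (iii)--(v) for it. Your fallback cannot work either: a disjoint union of bipartite graphs is bipartite, and a join of two graphs one of which has an edge contains a triangle.

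The missing idea is a product-type covering. Let $X$ be the disjoint union of $p-2+q$ copies of a triangle-free graph $H$ with $\chi(H)=p$ and diameter at most $2$, for example an iterated Mycielskian of $K_2$; the diameter condition makes (iii) hold inside a copy. Let $\mathbb{A}_r=\mathbb{B}_s$ be the family of all transversals (one vertex from each copy), and take $k=\ell=1$.

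Now consider a colouring of $G$ with $p+q-1$ colours. Suppose $q=1$, or $q=2$ and all $p+1$ colours occur on $X$. Then for each colour $c$ there is a transversal rainbow on the remaining colours; for $q=2$ this follows from Hall's theorem. That transversal forces $c$ onto its $Y$-vertex. Hence $Y$ sees every colour, and the vertex of $K$, which is complete to $Y$, has no colour left. If instead $q=2$ and $X$ uses only $p$ colours, take two disjoint rainbow transversals $T,T'$. They give adjacent vertices $y_T\sim z_{T'}$ that are both forced onto the single spare colour, again a contradiction.

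For $q=1$ the matching upper bound is explicit. Colour each $y$ with a colour missing from its transversal, colour $K\cup Z$ with $p+1$, and colour $L$ with $1$. Taking $H=K_2$, the same construction also covers $p=2$.
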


Moreover, we prove a general bound for the chromatic number of 
triangle-free graphs with diameter 2
depending on its connectivity.

\begin{thm}\label{thm:chimagu}
Let $G$ be a triangle-free graph with diameter $2$ and connectivity 
$\kappa(G) = g \ge 1$.
Then $\chi(G) \le g+1$.
Furthermore, this bound is best possible.
\end{thm}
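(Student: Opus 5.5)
Let $G$ be a triangle-free graph with diameter $2$ and $\kappa(G)=g$. The basic observation is that for every vertex $v$ the neighbourhood $N(v)$ is an independent set, because $G$ is triangle-free. For any non-adjacent pair $u,v$, the set $N(u)\cap N(v)$ separates $u$ from $v$: since $G$ has diameter $2$, every vertex other than $u,v$ is adjacent to $u$, adjacent to $v$, or has distance $2$ from both. So I would try to choose a vertex $v$ of minimum degree and colour via the closed neighbourhood structure. The aim is to show there is a vertex $v$ and a set $S$ with $|S|\le g$ such that $G-S$ is bipartite, or some analogous statement giving $g+1$ colours.

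\textbf{Key steps.} First, take a minimum vertex cut $S$, so $|S|=g$, and let $C_1,\dots,C_m$ ($m\ge2$) be the components of $G-S$. By diameter $2$, any $x\in C_1$ and $y\in C_2$ have a common neighbour, which must lie in $S$. Next I would show that each component $C_i$ is bipartite in a way compatible with $S$. Take $x\in C_1$. Every vertex of $C_2$ is adjacent to some vertex of $N(x)\cap S$. Since $N(x)$ is independent, $C_2\subseteq N(N(x)\cap S)$. Symmetrically, for $y\in C_2$, $C_1$ is covered by the neighbourhoods of vertices of $N(y)\cap S$.

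The colouring is then as follows. Give each vertex of $S$ its own colour $1,\dots,g$ (at most $g$ colours). Each vertex $w\in C_1\cup\dots\cup C_m$ must receive either a new colour $g+1$ or a colour of some $s\in S$ with $s\notin N(w)$. I would colour $w$ with colour $g+1$ if possible, and otherwise with the colour of some $s\in S$ that is a neighbour of a neighbour of $w$ and not adjacent to $w$. More cleanly, the plan is as follows.

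\begin{itemize}
\item[(a)] Show that the vertices outside $S$ not adjacent to any vertex of $S$ form an independent set. If $w,w'$ were adjacent, both in $C_1$ and both with no neighbour in $S$, then a vertex of $C_2$ would be at distance greater than $2$ from $w$. So colour all such vertices $g+1$.
\item[(b)] Every remaining vertex $w\notin S$ has a neighbour $s\in S$. Choose for it a designated $s(w)\in N(w)\cap S$. The vertices with the same designated $s$ lie in $N(s)$, which is independent. Colour $w$ with the colour assigned to $s$, then recolour $S$.
\end{itemize}

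To make (b) work I would instead colour the classes $N(s)\setminus S$ (over $s\in S$, partitioned greedily) with $g$ colours, and handle $S$ itself separately. \textbf{The main obstacle is exactly reconciling the colours of $S$ with those of the classes}, since $S$ may contain edges. I would try to exploit the fact that if $s,s'\in S$ are adjacent, then $N(s)\cup N(s')$ behaves bipartitely, and merge those two classes into two colours. For sharpness I would exhibit the Mycielski-type or Grötzsch graph: it has diameter $2$, connectivity $3$, and $\chi=4$, matching $g+1$ with $g=3$. $C_5$ works for $g=2$, and a star for $g=1$. For general $g$, I would look for a family attaining the bound or argue via the constructions of Theorem~\ref{prop:color1}.
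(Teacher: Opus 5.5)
\textbf{There is a genuine gap: the proposal never produces a proper $(g+1)$-colouring.}

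Your set-up is the right one: a minimum cut $S$, the fact that every vertex outside $S$ has a neighbour in $S$, and the independence of each $N(s)$. But the proposal stops at exactly the step that is the proof. You name ``reconciling the colours of $S$ with those of the classes'' as the main obstacle and leave it unresolved.

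As written, step (b) is improper: you give $w$ the colour of $s(w)$, yet $w\sim s(w)$. Separate palettes for $S$ and for the classes cost up to $2g$ colours. The suggested merging of classes for adjacent $s,s'\in S$ is neither developed nor needed.

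The missing idea is to choose the designated neighbour as the \emph{last} one in a fixed order of $S$ and to shift colours by one. Write $S=\{u_1,\dots,u_g\}$, colour $u_i$ with $i$, and colour each $w\notin S$ with $1+\max\{j : w\sim u_j\}\in\{2,\dots,g+1\}$. This is proper for four reasons:
\begin{itemize}
\item Two outside vertices of colour $c+1$ are both adjacent to $u_c$, so they are non-adjacent because $G$ is triangle-free.
\item An outside vertex of colour $c+1\le g$ is not adjacent to $u_{c+1}$, by maximality of $c$.
\item Colour $g+1$ does not occur in $S$.
\item The vertices of $S$ have pairwise distinct colours, so edges inside $S$ are harmless.
\end{itemize}
Note also that your step (a) is vacuous. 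Your own common-neighbour observation already shows that every vertex outside $S$ has a neighbour in $S$, so the set in (a) is empty.

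\textbf{Two further points.}

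First, your intermediate aim that each component of $G-S$ is bipartite is false. In the Gr\"{o}tzsch graph $M(C_5)$, the set $S=N(u_1)=\{w,v_2,v_5\}$ is a minimum cut, and the other component of $G-S$ contains the $5$-cycle $v_1u_2v_3v_4u_5$. In any case, $G-S$ bipartite with $|S|\le g$ would only give $\chi(G)\le g+2$.

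Second, for sharpness, the star, $C_5$ and the Gr\"{o}tzsch graph only settle $g\le 3$. To cover every $g$, use the iterated Mycielskians $M_t(C_5)$. These remain triangle-free of diameter $2$ and have $\chi=3+t$. They satisfy $\kappa=\delta=2+t$ by induction, using Theorem~\ref{thm:myciel_conn} together with $\delta(M(H))=\delta(H)+1$.
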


The rest of the paper is organized as follows.
In the next section, 
we describe the modification of the characterization of triangle-free 2-self-centered graphs
and give a complete proof of the correct characterization.
In Section~\ref{sec:mycielskian},
we provide a detailed explanation of Mycielskians 
to improve the self-containedness of this paper.
In Section~\ref{sec:planar},
we prove Theorem~\ref{thm:planar},
and then in Section~\ref{sec:color},
we prove Theorem~\ref{prop:color1} and Theorem~\ref{thm:chimagu}.

\section{Modification of the characterization in \cite{SMM2018}}\label{sec:characterization}

To complete the characterization,
we correctly define two notions which are introduced in~\cite{SMM2018}.
For a graph $G$, 
$V(G)$ and $E(G)$ denote the set of vertices and edges of $G$, respectively.
For a subset $S \subseteq V(G)$,
$G[S]$ denotes the subgraph of $G$ induced by $S$.
For two vertices $u,v \in V(G)$,
$d_G(u,v)$ denotes the distance between $u$ and $v$,
and for $S \subseteq V(G)$, $d_G(u,S) = \min\{d_G(u,v) : v \in S\}$.
If there is no path between $u$ and $v$,
then we define $d_G(u,v)=\infty$.
We denote by $u \sim v$ (resp., $u \not\sim v$)
if they are (resp., are not) adjacent.

\begin{defn}[Specialized bi-independent covering]\label{defn:01}
A graph $X$ is called to have a 
{\it specialized bi-independent covering via $(\mathbb{A}_r, \mathbb{B}_s)$},
an SBIC via $(\mathbb{A}_r, \mathbb{B}_s)$ in short, if the following hold:

\begin{itemize}
\item[(i)] $X$ is triangle-free.

\item[(ii)] There are two families 
$\mathbb{A}_r = \{A_1,\dots,A_r\}$
and 
$\mathbb{B}_s = \{B_1,\dots,B_s\}$
of not necessarily distinct independent subsets of $X$
such that $V(X) = \bigcup^r_{i=1} A_i = \bigcup^s_{j=1} B_j$.

\item[(iii)]
For all $u,v \in V(X)$ if $d_X(u,v) \ge 3$,
then 
there is an $1 \le i \le r$ such that $u,v \in A_i$ 
or 
there is an $1 \le j \le s$ such that $u,v \in B_j$. 

\item[(iv)]
For all $u \in V(X)$ and $i \in \{1,\dots,r\}$,
if $d_X(u,A_i) \ge 2$,
then there is $j \in \{1,\dots,s\}$ such that 
$A_i \cap B_j = \emptyset$ and $u \in B_j$.

\item[(v)]
For all $u \in V(X)$ and $j \in \{1,\dots,s\}$,
if $d_X(u,B_j) \ge 2$,
then there is $i \in \{1,\dots,r\}$ such that 
$A_i \cap B_j = \emptyset$ and $u \in A_i$.

\end{itemize} 

\end{defn}

\begin{defn}[$X$-generalized complete bipartite]\label{defn:02}
A graph $G$ is {\it $X$-generalized complete bipartite},
denoted by $GCB_X(k, \ell, \mathbb{A}_r, \mathbb{B}_s)$
with $k,\ell,r,s \ge 0$,
if $X$ has an SBIC via $(\mathbb{A}_r, \mathbb{B}_s)$ and 
$G$ is constructed in the following way:

\begin{itemize}
\item[(1)] $V(G) = K \cup L \cup Y \cup Z \cup V(X)$
where $|K|=k, |L|=\ell, Y = \{y_1,\dots,y_r\}, Z = \{z_1,\dots,z_s\}$.
Moreover, $E(X) \subseteq E(G)$.

\item[(2)] $a \sim t$ for all $a \in K$ and $t \in L \cup Y$.

\item[(3)] $b \sim t$ for all $b \in L$ and $t \in K \cup Z$.

\item[(4)] $y_i \sim t$ for all $t \in A_i$ and $1 \le i \le r$.

\item[(5)] $z_j \sim t$ for all $t \in B_i$ and $1 \le j \le s$.

\item[(6)] $y_i \sim z_j$ if and only if  $A_i \cap B_j = \emptyset$.

\end{itemize}

Some special cases must be treated separately as follows:

\begin{itemize}
\item[(7)] If $k = 0$ and $\ell \neq 0$,
then every member of $Y$ has a neighbor in $Z$
(this implies $s,r \ge 2$ by (6)).
Moreover, for all $i,j \in \{1,\dots,r\}$ with $i \ne j$,
we have $A_i \cap A_j \neq \emptyset$
or there is $p \in \{1,\dots,s\}$ such that
$A_i \cap B_p = A_j \cap B_p = \emptyset$.

\item[(8)] 
If $k \neq 0$ and $\ell = 0$, 
then every member of $Z$ has a neighbor in $Y$
(this implies $s,r \ge 2$ by (6)).
Moreover,
for all $i,j \in \{1,\dots,s\}$ with $i \ne j$,
we have $B_i \cap B_j \neq \emptyset$
or there is $q \in \{1,\dots,r\}$ such that
$B_i \cap A_q = B_j \cap A_q = \emptyset$.

\item[(9)]
If $k = \ell = 0$, then $r,s \ge 1$.
For all $i,j \in \{1,\dots,r\}$ with $i \ne j$,
we have $A_i \cap A_j \neq \emptyset$
or there is $p \in \{1,\dots,s\}$ such that
$A_i \cap B_p = A_j \cap B_p = \emptyset$.
Similarly,
for all $i,j \in \{1,\dots,s\}$ with $i \ne j$,
we have $B_i \cap B_j \neq \emptyset$
or there is $q \in \{1,\dots,r\}$ such that
$B_i \cap A_q = B_j \cap A_q = \emptyset$.

\item[(10)] $r=s=0$ if and only if 
$V(X) = \emptyset$ and $k,\ell \ge 2$.

\item[(11)] 
If $|V(X)| = 1$, then $k$ and $\ell$ are non-zero.

\end{itemize}

\end{defn}

As shown in Figure~\ref{fig:examples},
the Petersen graph can be represented as $GCB_X(1, 1, \mathbb{A}_2, \mathbb{B}_2)$,
where $X$ consists of two disjoint $K_2$s.

\begin{figure}[htb]
\centering
\input{petersen_sbic}
\caption{The Petersen graph}
\label{fig:examples}
\end{figure}

\begin{rem}\label{rem:01}
Any $X$-generalized complete bipartite graph is a non-empty connected graph.
Moreover, $r=s=0$ or $r,s \ge 1$ holds,
and in particular, $G = K_{k,l}$ with $k,l \ge 2$ if $r=s=0$ by (10).
If $X \neq \emptyset$, then $r,s > 0$ by the definition, 
and hence, it is impossible for only one of them to be 0.
\end{rem}

\begin{rem}\label{rem:02}
In~\cite{SMM2018},
an $X$-generalized complete bipartite graph is not well defined.
Definition~\ref{defn:02} of our paper
includes the case of $\ell \neq 0$ and $k \neq 0$,
while Definition~11 in~\cite{SMM2018} does not.
This omission causes a problem when $k = \ell = 0$:
In this case, it is possible that no vertex in $Y$ (respectively, $Z$)
is adjacent to any vertex in $Z$ (respectively, $Y$).
(For example, $((Y \cup Z), V(X))$ may form a bipartition of a complete bipartite graph.)
Moreover, their definition also includes some unnecessary conditions.
\end{rem}

The following proposition was proved in~\cite{SMM2018};
however, 
we reprove it here since their definition has an issue, 
as noted in Remark~\ref{rem:02}.

\begin{prop}[Proposition 12 in \cite{SMM2018}]\label{prop:01}
Any $X$-generalized complete bipartite graph is 
a triangle-free $2$-self-centered graph.
\end{prop}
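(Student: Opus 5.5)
We must show that $G=GCB_X(k,\ell,\mathbb{A}_r,\mathbb{B}_s)$ is triangle-free, connected, has diameter $2$, and that no vertex is adjacent to all others (radius $2$). The plan is to go through pairs of vertex classes among $K,L,Y,Z,V(X)$ systematically.

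\emph{Triangle-freeness.} The sets $K$, $L$, $Y$, $Z$ are independent by construction. $K\cup Z$ and $L\cup Y$ have no edges between them, and $K\cup L$ has no neighbours in $V(X)$. So a triangle can only lie inside $X$, which is excluded by (i), or use vertices of $Y\cup Z\cup V(X)$. Triangles of type $y_i,t,t'$ with $t,t'\in A_i$ are impossible because $A_i$ is independent, and similarly for $z_j$. A triangle $y_i,z_j,t$ with $t\in V(X)$ would need $t\in A_i\cap B_j$, contradicting $y_i\sim z_j\iff A_i\cap B_j=\emptyset$. Triangles through $K$ or $L$ would need an edge inside $L\cup Y$ or inside $K\cup Z$, and there is none.

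\emph{Distance at most $2$.} I would check each pair type:
\begin{itemize}
\item $K$--$K$ and $K$--$Z$ via a vertex of $L$; this requires $\ell\ge1$, and otherwise (8) supplies a path through $Y$, using that every $z_j$ has a neighbour in $Y$.
\item $Y$--$Y$: if $k\ge1$, use a vertex of $K$. If $k=0$, use (7)/(9): either a common vertex of $A_i\cap A_j$ in $X$, or a common neighbour $z_p$.
\item $Y$--$Z$ with $y_i\not\sim z_j$: then $A_i\cap B_j\ne\emptyset$, so they share a neighbour in $X$.
\item $Y$--$V(X)$: for $u\notin A_i$, either $d_X(u,A_i)=1$, giving distance $2$, or (iv) gives $z_j\sim u$ with $A_i\cap B_j=\emptyset$, so $y_i z_j u$ is a path.
\item $V(X)$--$V(X)$: either $d_X\le2$, or by (iii) they share some $A_i$ or $B_j$, giving a common neighbour $y_i$ or $z_j$.
\item $K$--$V(X)$: go through $Y$, since the $A_i$ cover $V(X)$.
\item $K$--$L$: adjacent.
\end{itemize}
Degenerate cases, $r=s=0$ (giving $K_{k,\ell}$ by (10)) and $|V(X)|=1$ with (11), are handled separately.

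\emph{No dominating vertex.} For $a\in K$, a non-neighbour is another vertex of $K$ or a vertex of $Z\cup V(X)$. This set is nonempty, using $k\ge2$ when $r=s=0$ and $V(X)\ne\emptyset$ otherwise. The cases $y_i$, $z_j$, $u\in V(X)$ are analogous: for instance, $u$ is not adjacent to $K$ when $k\ge1$, and when $k=\ell=0$ one uses $r,s\ge1$ together with $|V(X)|\ge2$ or independence.

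\emph{Main obstacle.} I expect the main difficulty to be the bookkeeping of the special cases (7)--(11), in particular ruling out a dominating vertex or a star when $k=\ell=0$ and $X$ is tiny. I would treat these boundary cases first and then run the generic case analysis above.
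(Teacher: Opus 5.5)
Your proposal is correct and takes essentially the paper's route: triangle-freeness from the bipartition $(K\cup Z, L\cup Y)$ of $G-X$, independence of the $A_i,B_j$ and condition (6); a class-by-class check that all pairs are at distance at most $2$ using (iii)--(v) and (7)--(9); and ruling out a dominating vertex (the paper also rules out degree-$1$ vertices, which is redundant given the other facts). Two small repairs: you mean that $K\cup Z$ and $L\cup Y$ each have no edges \emph{inside} them, not ``between'' them, and the $y_i$ case of the dominating-vertex check is not quite analogous to the $K$ case but follows from (6), since if $A_i=V(X)$ then $y_i\not\sim z_j$ for every $z_j$ with $B_j\neq\emptyset$.
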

\begin{proof}
Let $G = GCB_X(k, \ell, \mathbb{A}_r, \mathbb{B}_s)$ and $t = |V(X)|$.
Then $n := |V(G)| = k + \ell + r + s + t$.
By Remark~\ref{rem:01},
we may assume that $r,s \ge 1$, i.e., $V(X) \neq \emptyset$.

We first show that 
any vertex $v$ in $G$ has neither degree 1 nor $n-1$.

\bigskip
\noindent
{\bf Case 1. $v \in K$ (the case $v \in L$ is similar)}

Note that $\deg(v) = \ell + r = n - k - s - t$.
Now $\deg(v) \leq n - 2$ since now $k,s \ge 1$.
On the other hand, if $\ell + r = 1$, that is, $\ell = 0$ (and $r=1$),
then now a special case (8) occurs, and hence we have $r \ge 2$, a contradiction.
Thus, $\deg(v) = \ell + r \ge 2$.

\bigskip
\noindent
{\bf Case 2. $v = y_i \in Y$ for some $i \in \{1,\dots,r\}$ (the case $v \in Z$ is similar)} 

If $\ell \neq 0$ or a vertex in $X$ is not adjacent to $v$, 
then $\deg(v) \leq n - 2$.
Thus, we assume that $\ell = 0$ and all vertices in $X$ are adjacent to $v$.
In this case,
any vertex in $Z$ is not adjacent to $v$ by (6), 
and hence $\deg(v) \leq n - 2$.
On the other hand, we assume $\deg(v) = 1$ for contradiction.
Then $v$ is adjacent to only a vertex $x \in V(X)$ and $k = 0$.
By (7) or (9), $v$ must be adjacent to a vertex in $Z$, a contradiction.

\medskip
\noindent
{\bf Case 3. $v \in V(X)$} 

If $V(X)=\{v\}$, $\deg(v) \leq n - 2$ by (11).
Suppose $|V(X)| \ge 2$.
If there is a vertex in $V(X) \setminus \{v\}$ not adjacent to $v$, then we are done.
If all vertices in $V(X) \setminus \{v\}$ are adjacent to $v$,
then since $v$ and any other vertex in $X$ cannot be in the same independent set,
there is $i \in \{1,\dots,r\}$ such that $A_i$ does not contain $v$,
that is, $y_i$ is not adjacent to $v$. Hence $\deg(v) \leq n - 2$.
On the other hand, there are $i \in \{1,\dots,r\}$ and $j \in \{1,\dots,s\}$
such that $v \in A_i$ and $v \in B_j$, and hence, $\deg(v) \ge 2$.

\bigskip
Next, we show that for any two vertices $u,v$,
they are adjacent or there is a common neighbor of them.
There are 25 different ways for choosing $u$ and $v$ from $V(G)=K \cup L \cup Y \cup Z \cup V(X)$.

If $(u,v) \in K \times L \cup K \times Y \cup L \times Z$,
then $u$ and $v$ are adjacent.

If $(u,v) \in K \times K \cup L \times L$,
then there is a path of length $2$ between $u$ and $v$ via one of the sets $L \cup Y$ or $K \cup Z$.

If $(u,v) \in Y \times Y$,
then there is a path of length $2$ between $u$ and $v$ via $K$ if $k \neq 0$.
So assume $k = 0$.
Then, by (7) or (9) together with (6),
there is a path of length $2$ between $u$ and $v$ via $X$ or $Z$.
The case $(u,v) \in Z \times Z$ is similar.

If $(u,v) \in L \times Y$,
then there is a path of length $2$ between $u$ and $v$ via $K$ if $k \neq 0$.
So assume $k = 0$.
Then, by (7),
since every vertex in $Y$ is adjacent to a vertex in $Z$,
there is a path of length $2$ between $u$ and $v$ via $Z$.
The case $(u,v) \in K \times Z$ is similar.

If $(u,v) \in Y \times Z$,
then $u = y_i$ and $v = z_j$ for some $i,j$
If $A_i \cap B_j \neq \emptyset$, 
there is a path of length $2$ between $u$ and $v$ via $X$.
Otherwise, if $A_i \cap B_j = \emptyset$, 
$u$ and $v$ are adjacent by (6).

If $(u,v) \in V(X) \times V(X)$, then we are done if $d_X(u,v) \le 2$.
If $d_X(u,v) \ge 3$, then by the definition (iii) of an SBIC via $(\mathbb{A}_r, \mathbb{B}_s)$,
there is a path of length $2$ between $u$ and $v$ via $Y$ or $Z$.

If $(u,v) \in K \times V(X) \cup L \times V(X)$,
there is a path of length $2$ between $u$ and $v$ via $Y$ or $Z$.

If $(u,v) \in Y \times V(X)$, then $u = y_i$.
If $d_X(v,A_i) \le 1$, then $d_G(u,v) \le 2$.
Otherwise, if $d_X(v,A_i) \ge 2$,
there is $j \in \{1,\dots,r\}$ such that $A_i \cap B_j = \emptyset$ and $v \in B_j$.
Thus, there is a path of length $2$ between $u$ and $v$ via $z_j \in Z$ by (6).
The case $(u,v) \in Z \times V(X)$ is similar.

\bigskip
Finally, we show that $G$ is triangle-free.
We assume to the contrary that there is a triangle $uvw$ in $G$.
Since we assume $X$ is triangle-free,
$u,v,w$ are not all together in $X$.
Moreover, at least one vertex of $u,v,w$ is contained in $X$
since $G-X$ is bipartite.
If exactly two of $u,v,w$ are in $X$, say $u,v$,
then at least one of $u,v$ is not adjacent to $w \in Y \cup Z$
since they cannot be in the same independent set.
So assume exactly one of $u,v,w$, say $w$, is in $X$.
Since both $Y$ and $Z$ are independent sets, we let $u \in Y$ and $v \in Z$.
However, this is impossible by (6). 
\end{proof}

Now we give a complete proof of Theorem~\ref{thm:ch}.

\begin{proof}[Proof of Theorem~$\ref{thm:ch}$]
The sufficiency holds by Proposition~\ref{prop:01}.
Hence we shall prove the necessity of the theorem.

Let $Y'$ be a maximal independent set of $G$,
$Z'$ a maximal independent set of $G - Y'$
and $X = G - (Y' \cup Z')$.
Let $K$ (resp., $L$) be the set of all vertices in $Z'$ (resp., $Y'$)
which are not adjacent to any member of $X$ and put $Y = Y' \setminus L$ and $Z = Z' \setminus K$.

Let $a \in K$ and $y' \in Y' = Y \cup L$.
We claim that $ay' \in E(G)$.
Suppose to the contrary that $ay' \notin E(G)$.
Since the diameter of $G$ is 2,
there is a vertex $u$ in $G$ which is adjacent to both $a$ and $y'$.
The vertex $u$ cannot be in $Y' \cup Z'$ since $Y'$ and $Z'$ are independent sets.
Hence $u \in V(X)$. However, this contradicts the definition of $K$ (property (2)).
A similar argument shows that each member of $L$ is adjacent to each member of $Z'$ (property (3)).

Let $k = |K|, \ell = |L|, r = |Y|, s = |Z|, Y = \{y_1,\dots,y_r\}$
and $Z = \{z_1,\dots,z_s\}$.
Now put $A_i = N_X(y_i)$ and $B_j = N_X(z_j)$,
where $N_X(v)$ denotes the set of neighbors of a vertex $v$ contained in $X$ (properties (4) and (5)).

Each $A_i$ and each $B_j$ are independent, since $G$ is triangle-free.
Moreover, if $y_iz_j \notin E(G)$,
then since the diameter of $G$ is 2,
there should be a vertex $x \in V(X)$ such that $y_ix, xz_j \in E(G)$.
Thus $x \in A_i \cap B_j$.
If $y_iz_j \in E(G)$,
then there must not be such a vertex $x$ since $G$ is triangle-free.
Therefore, $y_i \sim z_j$ if and only if $A_i \cap B_j = \emptyset$ (property (6)).

\medskip

We show that 
$X$ has an SBIC via $(\mathbb{A}_r, \mathbb{B}_s)$
where $\mathbb{A}_r = \{A_1,\dots,A_r\}$ and $\mathbb{B}_s = \{B_1,\dots,B_s\}$.
Let $x \in V(X)$.
Since $Y'$ and $Z'$ are maximal independent sets of $G$ and $G-Y'$, respectively,
there should be neighbors of $x$ in both $Y'$ and $Z'$.
We know that these neighbors are in $Y$ and $Z$.
By letting $y_i$ and $z_j$ be such neighbors of $x$,
we have $x \in A_i$ and $x \in B_j$.
This implies that $V(X) = \bigcup^r_{i=1} A_i = \bigcup^s_{j=1} B_j$.

It is trivial that $X$ is triangle-free, since $G$ is triangle-free.
So we check properties (iii)--(v).
If $d_X(u,v) \ge 3$ for $u,v \in V(X)$, 
then since the diameter of $G$ is 2,
there is a vertex $w \in Y \cup Z$, 
which means that $u,v \in A_i$ or $u,v \in B_j$ (property (iii)).

For $u \in V(X)$ and $i \in \{1,\dots,r\}$,
if $d_X(u,A_i) \ge 2$, i.e., $u$ is not adjacent to any vertex in $A_i$,
then there is $j \in \{1,\dots,s\}$ such that 
$u \in B_j$ and $A_i \cap B_j = \emptyset$ by $V(X) = \bigcup^s_{j=1} B_j$ and by that $G$ is 2-self-centered
(property (iv)).
We explain in more detail here:
We assume to the contrary that 
for any $j \in \{1,\dots,s\}$, 
if $u \in B_j$, then $A_i \cap B_j \neq \emptyset$.
Then $y_i$ is not adjacent to any vertex in $Z'$ by (6).
This implies that $d_G(u,y_i) \ge 3$, a contradiction.
The property (v) can be similarly proved as above.
Therefore, $X$ has an SBIC via $(\mathbb{A}_r, \mathbb{B}_s)$.

\medskip

Finally, we verify that $G$ satisfies all special cases (7)--(11).
Since (10) and (11) trivially hold by 2-self-centeredness of $G$,
we consider the cases (7), (8) and (9).
Suppose $k=0$ and $\ell \neq 0$ (i.e., the case (7)).
In this case, any vertex $y_i \in Y$ must be adjacent to a vertex in $Z$ 
since the diameter of $G$ is 2.
Moreover, the latter of (7) also holds by (4) and (6),
since $d_G(u,v) = 2$ for any two vertices $u$ and $v$ in $Y$.
The case (8) is similar.
For the case $k = \ell = 0$,
the latter of (7) and that of (8) both hold similarly to the above.
Therefore (9) is satisfied.
This completes the proof of the theorem.
\end{proof}

In the end of this section, we prove Proposition~\ref{prop:girth5}.

\begin{proof}[Proof of Proposition~$\ref{prop:girth5}$]
Let $ab$ be an edge of $G$.
We define vertex subsets of Definition~\ref{defn:02} as follows:
$K = \{a\}$, $L=\{b\}$, $Y = N_G(a) \setminus \{b\}$, 
$Z = N_G(b) \setminus \{a\}$ and $V(X) = V(G) \setminus (K \cup L \cup Y \cup Z)$
in which 
$X$ is the subgraph of $G$ induced by $V(X)$,
$Y = \{y_1,\dots,y_{r = \deg_G(a)-1}\}$,
$Z = \{z_1,\dots,z_{s = \deg_G(b)-1}\}$,
$A_i = V(X) \cap (N_G(y_i) \setminus \{a\})$
and 
$B_i = V(X) \cap (N_G(z_i) \setminus \{b\})$.
It suffices to prove that $X$ has an SBIC via $(\mathbb{A}_r, \mathbb{B}_s)$
where $\mathbb{A}_r = \{A_1,\dots,A_r\}$ and $\mathbb{B}_s = \{B_1,\dots,B_s\}$,
and all other vertex subsets satisfy conditions (1)--(6) in Definition~\ref{defn:02}.

We first check conditions (1)--(6).
It is clear that (1)--(5) hold by the above definition.
So it suffices to check that (6) holds.
If $A_i \cap B_j = \emptyset$ for some $i,j$,
then we have $y_i \sim z_j$, which contradicts the girth of $G$
since there is a 4-cycle through $y_i,z_j,L$ and $K$.
Therefore, we have 
$y_i \not\sim z_j$ and $A_i \cap B_j \neq \emptyset$ for any $i,j$,
which implies that (6) holds.

Then we show that $X$ has an SBIC via $(\mathbb{A}_r, \mathbb{B}_s)$.
By the assumption of $G$ and our definition, (i) and (ii) trivially hold.
Moreover, (iii) also holds since the diameter of $G$ is 2.
If there is a vertex $u \in V(X) \setminus A_i$ with $d_X(u,A_i) \ge 2$ for some $i$,
then $d_G(u,y_i) \ge 3$ since $y_i \not\sim z_j$ for any $i,j$, a contradiction.
Thus, there is no such vertex $u$, that is, both (iv) and (v) hold.
Therefore, $X$ has an SBIC via $(\mathbb{A}_r, \mathbb{B}_s)$,
and hence the proposition holds.
\end{proof}

\section{Mycielskian}\label{sec:mycielskian}

Let $G$ be a graph of order $n \ge 2$
and let $V(G)=\{v_1,v_2,\dots,v_n\}$.
The {\em Mycielskian} or {\em Mycielski graph} $M(G)$ of $G$
is obtained from $G$ and $n+1$ additional vertices 
$u_1, u_2, \dots, u_n, w$, as follows:
Each vertex $u_i$ is connected by an edge to $w$
and for each edge $v_iv_j \in E(G)$,
there are two edges $u_iv_j$ and $v_iu_j$
(see Figure~\ref{fig:myciel}).
Note that the resulting graph has $2n+1$ vertices and $3|E(G)|+n$ edges.
We call this construction {\em Mycielski's construction}.

\begin{figure}[htb]
\centering
\unitlength 0.1in
\begin{picture}( 48.6000, 13.5500)( 13.9000,-36.0000)
\put(16.1500,-33.5000){\makebox(0,0){$v_1$}}%
%
\special{pn 8}%
\special{sh 1.000}%
\special{ar 1606 3200 40 40  0.0000000 6.2831853}%
%
\special{pn 8}%
\special{sh 1.000}%
\special{ar 2006 3200 40 40  0.0000000 6.2831853}%
%
\special{pn 8}%
\special{sh 1.000}%
\special{ar 2406 3200 40 40  0.0000000 6.2831853}%
%
\special{pn 8}%
\special{sh 1.000}%
\special{ar 2806 3200 40 40  0.0000000 6.2831853}%
%
\special{pn 8}%
\special{sh 1.000}%
\special{ar 3206 3200 40 40  0.0000000 6.2831853}%
%
\special{pn 8}%
\special{pa 3206 3200}%
\special{pa 1606 3200}%
\special{fp}%
\put(20.1500,-33.5000){\makebox(0,0){$v_2$}}%
\put(24.1500,-33.5000){\makebox(0,0){$v_3$}}%
\put(28.1500,-33.5000){\makebox(0,0){$v_4$}}%
\put(32.1500,-33.5000){\makebox(0,0){$v_5$}}%
%
\special{pn 8}%
\special{pa 3600 3000}%
\special{pa 4200 3000}%
\special{fp}%
\special{sh 1}%
\special{pa 4200 3000}%
\special{pa 4134 2980}%
\special{pa 4148 3000}%
\special{pa 4134 3020}%
\special{pa 4200 3000}%
\special{fp}%
%
\special{pn 8}%
\special{pa 3206 3200}%
\special{pa 3176 3222}%
\special{pa 3148 3244}%
\special{pa 3120 3264}%
\special{pa 3092 3286}%
\special{pa 3062 3306}%
\special{pa 3034 3328}%
\special{pa 3006 3348}%
\special{pa 2976 3368}%
\special{pa 2948 3388}%
\special{pa 2920 3406}%
\special{pa 2890 3424}%
\special{pa 2862 3442}%
\special{pa 2834 3460}%
\special{pa 2804 3476}%
\special{pa 2776 3492}%
\special{pa 2748 3506}%
\special{pa 2718 3520}%
\special{pa 2690 3534}%
\special{pa 2662 3546}%
\special{pa 2634 3556}%
\special{pa 2604 3566}%
\special{pa 2576 3576}%
\special{pa 2548 3582}%
\special{pa 2518 3590}%
\special{pa 2490 3594}%
\special{pa 2462 3598}%
\special{pa 2432 3600}%
\special{pa 2404 3600}%
\special{pa 2376 3600}%
\special{pa 2346 3598}%
\special{pa 2318 3594}%
\special{pa 2290 3588}%
\special{pa 2260 3582}%
\special{pa 2232 3574}%
\special{pa 2204 3566}%
\special{pa 2176 3556}%
\special{pa 2146 3544}%
\special{pa 2118 3532}%
\special{pa 2090 3520}%
\special{pa 2060 3506}%
\special{pa 2032 3490}%
\special{pa 2004 3474}%
\special{pa 1974 3458}%
\special{pa 1946 3440}%
\special{pa 1918 3422}%
\special{pa 1888 3404}%
\special{pa 1860 3386}%
\special{pa 1832 3366}%
\special{pa 1804 3346}%
\special{pa 1774 3326}%
\special{pa 1746 3304}%
\special{pa 1718 3284}%
\special{pa 1688 3262}%
\special{pa 1660 3242}%
\special{pa 1632 3220}%
\special{pa 1606 3200}%
\special{sp}%
\put(46.1500,-33.5000){\makebox(0,0){$v_1$}}%
%
\special{pn 8}%
\special{sh 1.000}%
\special{ar 4606 3200 40 40  0.0000000 6.2831853}%
%
\special{pn 8}%
\special{sh 1.000}%
\special{ar 5006 3200 40 40  0.0000000 6.2831853}%
%
\special{pn 8}%
\special{sh 1.000}%
\special{ar 5406 3200 40 40  0.0000000 6.2831853}%
%
\special{pn 8}%
\special{sh 1.000}%
\special{ar 5806 3200 40 40  0.0000000 6.2831853}%
%
\special{pn 8}%
\special{sh 1.000}%
\special{ar 6206 3200 40 40  0.0000000 6.2831853}%
%
\special{pn 8}%
\special{pa 6206 3200}%
\special{pa 4606 3200}%
\special{fp}%
\put(50.1500,-33.5000){\makebox(0,0){$v_2$}}%
\put(54.1500,-33.5000){\makebox(0,0){$v_3$}}%
\put(58.1500,-33.5000){\makebox(0,0){$v_4$}}%
\put(62.1500,-33.5000){\makebox(0,0){$v_5$}}%
%
\special{pn 8}%
\special{pa 6206 3200}%
\special{pa 6176 3222}%
\special{pa 6148 3244}%
\special{pa 6120 3264}%
\special{pa 6092 3286}%
\special{pa 6062 3306}%
\special{pa 6034 3328}%
\special{pa 6006 3348}%
\special{pa 5976 3368}%
\special{pa 5948 3388}%
\special{pa 5920 3406}%
\special{pa 5890 3424}%
\special{pa 5862 3442}%
\special{pa 5834 3460}%
\special{pa 5804 3476}%
\special{pa 5776 3492}%
\special{pa 5748 3506}%
\special{pa 5718 3520}%
\special{pa 5690 3534}%
\special{pa 5662 3546}%
\special{pa 5634 3556}%
\special{pa 5604 3566}%
\special{pa 5576 3576}%
\special{pa 5548 3582}%
\special{pa 5518 3590}%
\special{pa 5490 3594}%
\special{pa 5462 3598}%
\special{pa 5432 3600}%
\special{pa 5404 3600}%
\special{pa 5376 3600}%
\special{pa 5346 3598}%
\special{pa 5318 3594}%
\special{pa 5290 3588}%
\special{pa 5260 3582}%
\special{pa 5232 3574}%
\special{pa 5204 3566}%
\special{pa 5176 3556}%
\special{pa 5146 3544}%
\special{pa 5118 3532}%
\special{pa 5090 3520}%
\special{pa 5060 3506}%
\special{pa 5032 3490}%
\special{pa 5004 3474}%
\special{pa 4974 3458}%
\special{pa 4946 3440}%
\special{pa 4918 3422}%
\special{pa 4888 3404}%
\special{pa 4860 3386}%
\special{pa 4832 3366}%
\special{pa 4804 3346}%
\special{pa 4774 3326}%
\special{pa 4746 3304}%
\special{pa 4718 3284}%
\special{pa 4688 3262}%
\special{pa 4660 3242}%
\special{pa 4632 3220}%
\special{pa 4606 3200}%
\special{sp}%
%
\special{pn 8}%
\special{sh 1.000}%
\special{ar 6210 2800 40 40  0.0000000 6.2831853}%
%
\special{pn 8}%
\special{sh 1.000}%
\special{ar 5810 2800 40 40  0.0000000 6.2831853}%
%
\special{pn 8}%
\special{sh 1.000}%
\special{ar 5410 2800 40 40  0.0000000 6.2831853}%
%
\special{pn 8}%
\special{sh 1.000}%
\special{ar 5010 2800 40 40  0.0000000 6.2831853}%
%
\special{pn 8}%
\special{sh 1.000}%
\special{ar 4610 2800 40 40  0.0000000 6.2831853}%
%
\special{pn 8}%
\special{sh 1.000}%
\special{ar 5410 2400 40 40  0.0000000 6.2831853}%
%
\special{pn 8}%
\special{pa 5410 2400}%
\special{pa 4610 2800}%
\special{fp}%
%
\special{pn 8}%
\special{pa 5010 2800}%
\special{pa 5410 2400}%
\special{fp}%
%
\special{pn 8}%
\special{pa 5410 2400}%
\special{pa 5410 2800}%
\special{fp}%
%
\special{pn 8}%
\special{pa 5410 2400}%
\special{pa 5810 2800}%
\special{fp}%
%
\special{pn 8}%
\special{pa 6210 2800}%
\special{pa 5410 2400}%
\special{fp}%
%
\special{pn 8}%
\special{pa 5010 3200}%
\special{pa 4610 2800}%
\special{fp}%
%
\special{pn 8}%
\special{pa 5010 2800}%
\special{pa 5410 3200}%
\special{fp}%
%
\special{pn 8}%
\special{pa 5810 3200}%
\special{pa 5410 2800}%
\special{fp}%
%
\special{pn 8}%
\special{pa 5810 2800}%
\special{pa 6210 3200}%
\special{fp}%
%
\special{pn 8}%
\special{pa 6210 2800}%
\special{pa 5810 3200}%
\special{fp}%
%
\special{pn 8}%
\special{pa 5810 2800}%
\special{pa 5410 3200}%
\special{fp}%
%
\special{pn 8}%
\special{pa 5410 2800}%
\special{pa 5010 3200}%
\special{fp}%
%
\special{pn 8}%
\special{pa 5010 2800}%
\special{pa 4610 3200}%
\special{fp}%
%
\special{pn 8}%
\special{pa 4610 2800}%
\special{pa 6210 3200}%
\special{fp}%
%
\special{pn 8}%
\special{pa 6210 2800}%
\special{pa 4610 3200}%
\special{fp}%
\put(46.2000,-26.7000){\makebox(0,0){$u_1$}}%
\put(49.9000,-26.9000){\makebox(0,0){$u_2$}}%
\put(58.2000,-26.8000){\makebox(0,0){$u_4$}}%
\put(62.2000,-26.7000){\makebox(0,0){$u_5$}}%
\put(55.3000,-27.0000){\makebox(0,0){$u_3$}}%
\put(55.4000,-23.3000){\makebox(0,0){$w$}}%
\end{picture}%
\caption{Mycielski's construction applied to $C_5 = v_1v_2v_3v_4v_5$;
the right graph is the Gr\"{o}tzsch graph}
\label{fig:myciel}
\end{figure}

We denote by $M_t(G)$ 
the Mycielskian obtained from $G$ by applying 
Mycielski's construction at $t \ge 0$ times.
Observe that if $G = K_2 = M_0(G)$,
then $M_1(G) = C_5$ and $M_2(G)$ is the Gr\"{o}tzsch graph.

Mycielskians have many good properties as follows:
\begin{itemize}
\item $M(G)$ contains $G$ as an induced subgraph.
\item If the minimum degree $\delta(G)$ of $G$ is $k$, then that of $M(G)$ is $k+1$.
\item If the chromatic number of $G$ is $k$, then that of $M(G)$ is $k+1$~\cite{Mycel1955}.
\item If $G$ is a triangle-free, then so is $M(G)$~\cite{Mycel1955}.
\item If $G$ has diameter two, then so does $M(G)$~\cite{FMB1998}.
\end{itemize}

Fisher et al.~\cite{FMB1998} investigated several other propertis of Mycielskians,
such as Hamiltonicity and the domination number.
Moreover, 
there is a necessary and sufficient condition on the connectivity of Mycielskians, as follows.

\begin{thm}[Balakrishnan and Raj~\cite{BR2008}]\label{thm:myciel_conn}
Let $G$ be a connected graph.
Then $\kappa(M(G)) = \kappa(G)+1$ if and only if $\delta(G) = \kappa(G)$,
where $\kappa(G)$ denotes the vertex connectivity of $G$.
\end{thm}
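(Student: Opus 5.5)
The statement is a known result of Balakrishnan and Raj, so I only sketch a plan. I would prove the stronger formula
\[
\kappa(M(G)) = \min\{\delta(G)+1,\ 2\kappa(G)+1\}
\]
for connected $G$ of order $n \ge 2$. The stated equivalence then follows at once. Connectivity gives $\kappa(G) \ge 1$, so $2\kappa(G)+1 > \kappa(G)+1$. Hence the minimum equals $\kappa(G)+1$ exactly when $\delta(G)+1=\kappa(G)+1$, i.e.\ $\delta(G)=\kappa(G)$.

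\emph{Upper bounds.} First, let $v_i$ have minimum degree in $G$. Then $N_{M(G)}(u_i)=N_G(v_i)\cup\{w\}$ has size $\delta(G)+1$, and deleting it isolates $u_i$; for $n \ge 3$ something else remains, and the small cases are checked directly. Second, let $S$ be a minimum vertex cut of $G$ and $C$ a component of $G-S$. Delete $T=S\cup\{u_j : v_j\in S\}\cup\{w\}$, which has $2\kappa(G)+1$ vertices. The neighbours of $\{v_j,u_j : v_j\in C\}$ outside this set all lie in $T$, because every edge of $M(G)$ between index sets is copied from an edge of $G$. So $T$ separates $C$ together with its copies from the rest. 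When $G$ is complete, $\delta(G)+1$ is already the smaller value, so that case causes no problem.

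\emph{Lower bound.} Let $T\subseteq V(M(G))$ with $|T|\le \min\{\delta(G),2\kappa(G)\}$. I will show $M(G)-T$ is connected, splitting on whether $w\in T$.

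\textbf{Case $w\notin T$.} All surviving $u_j$ lie in the component of $w$. A surviving $v_i$ has at least $\deg_G(v_i)\ge\delta(G)\ge|T|$ copy-neighbours $u_j$. So either some copy-neighbour survives, or $T$ consists exactly of those $u_j$. In the latter case $G$ is untouched and connected, and some $v$ still has a surviving copy-neighbour, which links $G$ to $w$.

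\textbf{Case $w\in T$.} Write $S=T\cap V(G)$ and $S'=\{j : u_j\in T\}$, so that $|S|+|S'|\le 2\kappa(G)-1$. Hence $|S|<\kappa(G)$ or $|S'|<\kappa(G)$.

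If $|S|<\kappa(G)$, then $G-S$ is connected. Each surviving $u_i$ has at least $\delta(G)-|S|\ge 1$ surviving neighbours in $G$, since $|S|\le\delta(G)-1$. So everything attaches to $G-S$.

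If $|S'|<\kappa(G)$, I would instead use the graph $G-\{v_j : j\in S'\}$, which is connected. Each edge $v_iv_k$ of it is replaced by the path $v_i u_k$ or $u_i v_k$ whenever the relevant endpoint survives. The difficulty is handling vertices in $S$ together with vertices in $S'$.

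I expect this last subcase to be the main obstacle. The plan is to route through the ``bipartite double'' part of $M(G)$, namely the edges $v_iu_k$ for $v_iv_k\in E(G)$. I would argue that this part, restricted to the surviving vertices, is connected whenever fewer than $\kappa(G)$ indices are hit on each side in a suitable sense. The tool would be Menger's theorem in $G$: take $\kappa(G)$ internally disjoint paths between two indices, and count how many $T$ can destroy on the $v$-level and the $u$-level combined, using $|S|+|S'|<2\kappa(G)$. Alternating the path between levels avoids the deleted copy at each step. Non-bipartiteness issues, such as parity of the alternation, are absorbed because each vertex can be reached from both levels.
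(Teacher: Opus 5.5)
The paper does not prove this theorem; it only cites Balakrishnan and Raj, so your proposal has to stand on its own.

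\textbf{What is correct.} Your target formula $\kappa(M(G))=\min\{\delta(G)+1,\,2\kappa(G)+1\}$ is the right one, and deducing the stated equivalence from it is correct. Both upper bounds are correct. In the lower bound, the case $w\notin T$ and the case $w\in T$ with $|S|<\kappa(G)$ are fine.

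\textbf{The gap.} It is exactly the case you flag: $w\in T$ and $|S|\ge\kappa(G)$. There you give only a plan, and the plan as phrased fails. The ``bipartite double part'' (the edges $v_iu_k$ with $v_iv_k\in E(G)$) is the bipartite double cover $G\times K_2$. This is disconnected whenever $G$ is bipartite; for $G=C_4$ it splits into two components even when $T=\{w\}$. So connectivity cannot come from that part alone, and strict alternation between levels cannot be the mechanism. Saying that parity issues ``are absorbed'' is not an argument. Likewise, the count $|S|+|S'|<2\kappa(G)$ only helps once you know how many deletions it takes to destroy a path, and you never establish that.

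\textbf{The missing lemma.} Take surviving $v_x,v_y$ (so $v_x,v_y\notin S$) and a path $P=p_0p_1\cdots p_t$ from $v_x$ to $v_y$ in $G$. Lift $P$ to $M(G)$ by using the $v$-copy of $p_i$ whenever it survives and the $u$-copy otherwise. Consecutive lifted vertices are adjacent in $M(G)$ unless both are $u$-copies, because $\{u_1,\dots,u_n\}$ is independent while $v$--$v$ and $v$--$u$ steps are edges. Hence $P$ lifts to a path of $M(G)-T$ unless one of two obstructions occurs:
\begin{itemize}
\item some internal vertex of $P$ has both copies in $T$, or
\item two consecutive internal vertices have their $v$-copies in $T$.
\end{itemize}
Either obstruction uses two distinct elements of $T\setminus\{w\}$ lying on the interior of $P$.

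\textbf{Finishing the case.} If $v_x\not\sim v_y$, Menger's theorem gives $\kappa(G)$ internally disjoint $v_x$--$v_y$ paths in $G$. Their interiors are disjoint and $|T\setminus\{w\}|\le 2\kappa(G)-1$, so at most $\kappa(G)-1$ of these paths are blocked. Thus some path lifts, and $v_x$ and $v_y$ are connected in $M(G)-T$. Every surviving $u_k$ still has at least $\delta(G)-|S|\ge \delta(G)-|T|+1\ge 1$ surviving neighbours among the $v$'s, which completes the case.

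This argument covers the whole case $w\in T$ at once. Your split into $|S|<\kappa(G)$ versus $|S'|<\kappa(G)$ is therefore unnecessary.
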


\section{Proof of Theorem~\ref{thm:planar}}\label{sec:planar}


Since the sufficiency immediately holds, 
we only have to prove the necessity using Theorem~\ref{thm:ch}.
Let $G = GCB_X(k,\ell,\mathbb{A}_r, \mathbb{B}_s)$ 
be a triangle-free $2$-self-centered planar graph.
If $V(X) = \emptyset$ (i.e., $Y = Z = \emptyset$ by (10) of Definition~\ref{defn:02}),
then $G$ is a complete bipartite graph, that is, 
$G \cong K_{2,\ell}$ for $\ell \ge 2$ 
(in which we assume $|K|=2$ by symmtery).
Then we may assume that $V(X) \neq \emptyset$, that is, $|Y| \ge 1$ and $|Z| \ge 1$.

\begin{clm}\label{clm:no5cycle}
If $G$ has no $5$-cycle, then $G$ is isomorphic to a complete bipartite graph.
\end{clm}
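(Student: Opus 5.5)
The plan is to prove the claim directly from the metric properties of $G$, without using the decomposition of Theorem~\ref{thm:ch}. A triangle-free graph of diameter $2$ with no $5$-cycle should turn out to be bipartite, and Remark~\ref{rem:bip} then gives that it is a complete bipartite graph. The key step is to show that $G$ has no odd cycle. This can also be read in the $GCB_X$ language: $G-X$ is bipartite, so any odd cycle must use vertices of $X$, and we want to rule that out.

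To show bipartiteness, suppose $G$ is not bipartite and take a shortest odd cycle $C=v_0v_1\cdots v_{2m}$. Since $G$ is triangle-free and has no $5$-cycle, $2m+1\ge 7$. A shortest odd cycle is isometric for short distances: for two vertices of $C$ at cycle-distance $d\le m$, their distance in $G$ is exactly $d$. Otherwise a shorter path between them, combined with one of the two arcs of $C$, would produce a shorter odd closed walk and hence a shorter odd cycle.

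Now take $v_0$ and $v_3$. Their cycle-distance is $3\le m$ because $m\ge 3$, so $d_G(v_0,v_3)=3$. This contradicts diameter $2$. Hence $G$ is bipartite.

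Finally, $G$ is bipartite, connected and has diameter $2$. Any two vertices on the same side have a common neighbour, and any two vertices on opposite sides are at odd distance at most $2$, so they are adjacent. Therefore $G$ is complete bipartite, matching Remark~\ref{rem:bip}, and in the present setting it is $K_{a,b}$ with $a,b\ge 2$.

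The only step needing care is the isometry of a shortest odd cycle. The standard argument is as follows. Let $P$ be a $v_0$–$v_3$ path of length $\le 2$. Of the two arcs of $C$ between $v_0$ and $v_3$, of lengths $3$ and $2m-2$, the one of length $3$ has the same parity as $P$'s complement in the following sense: $P$ together with the arc of length $2m-2$ (if $|P|$ is odd) or the arc of length $3$ (if $|P|$ is even) gives a closed walk of odd length less than $2m+1$. Specifically, these lengths are $|P|+2m-2\le 2m-1$ or $|P|+3\le 5<2m+1$. An odd closed walk contains an odd cycle of at most that length, which contradicts minimality. This closes the argument.
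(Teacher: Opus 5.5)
Your proof is correct and follows the same route as the paper. Both arguments use diameter $2$ to exclude odd cycles of length at least $7$, conclude that $G$ is bipartite because it has no $3$- or $5$-cycles, and then apply Remark~\ref{rem:bip}, which you re-derive directly. Your shortest-odd-cycle argument with the pair $v_0,v_3$ is a more precise version of the paper's one-line justification: the paper speaks of a chord, but the short $v_0$--$v_3$ connection may pass through a vertex off the cycle.
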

\begin{proof}
Since the diameter of $G$ is two,
every odd cycle of length at least 7 contains a chord.
This implies that $G$ is bipartite,
since now $G$ has neither 3-cycle nor 5-cycle.
Thus, by Remark~\ref{rem:bip}, the claim holds since $G$ is planar.
\end{proof}

By Claim~\ref{clm:no5cycle}, we may assume that $G$ has a 5-cycle $C = abcde$.
Observe that 
if $K$ or $L$ is not empty,
$G$ has a 5-cycle containing at least one vertex in $K \cup L$ 
unless $G$ is a complete bipartite graph.
Thus, 
there are eleven cases depending on whether $K$ and $L$ are empty
and on which of $V(X),Y,Z,K$ and $L$ the vertices belong to:

\begin{description}
\item[The case $K \ne \emptyset$ or $L \ne \emptyset$] 
\item[(1)] $a \in K$, $b \in Y$, $c \in V(X)$, $d \in Z$, $e \in L$
\item[(2)] $a \in K$, $b,e \in Y$, $c \in V(X)$, $d \in Z$
\item[(3)] $a \in K$, $b,e \in Y$, $c,d \in V(X)$

\item[The case $K = L = \emptyset$]
\item[(i)] $a,c \in Y$, $b,d,e \in V(X)$
\item[(ii)] $a \in Y$, $c \in Z$, $b,d,e \in V(X)$
\item[(iii)] $a \in Y$, $b \in Z$, $c,d,e \in V(X)$
\item[(iv)] $a,c \in Y$, $b \in Z$, $d,e \in V(X)$
\item[(v)] $a,c \in Y$, $d \in Z$, $b,e \in V(X)$
\item[(vi)] $a \in Y$, $b,c,d,e \in V(X)$
\item[(vii)] $a,d \in Y$, $b,e \in Z$, $c \in V(X)$
\item[(viii)] $a,b,c,d,e \in V(X)$
\end{description}

\begin{figure}[thb]
\centering
\input{allcases}
\caption{The positions of the 5-cycle $C = abcde$}
\label{fig:allcases}
\end{figure}

We first consider (1), (2) and (3).
Hereafter, we assume that $G$ is not isomorphic to a blow-up $C_5$.

\bigskip
\noindent
{\bf Case (1).}
First we suppose by symmetry that $|K| \ge 2$.
If $|L| \ge 2$ or $|Y| \ge 2$, then $G$ clearly has a $K_{3,3}$-minor, a contradiction.
Thus, $|L|=|Y|=1$.
Since $|Y|=1$, $X$ consists only of isolated vertices.
If a vertex $z \in Z$ is not adjacent to one $x \in V(X)$,
then the distance between $z$ and $x$ is at least 3, a contradiction.
Thus, every vertex in $Z$ is adjacent to all vertices in $X$.
This implies that $|V(X)|=1$ or $|Z|=1$, that is, 
$G$ is isomorphic to a blow-up $C_5$, a contradiction.
Thus, we assume that $|K|=|L|=1$.

If $|Y|=1$ or $|Z|=1$ or $|V(X)|=1$, 
then $G$ is isomorphic to a blow-up $C_5$ similarly to the above, a contradiction.
Thus, we have $|Y| \ge 2$, $|Z| \ge 2$ and $|V(X)| \ge 2$. 
Let $y_1 \in Y \setminus \{b\}$ and $z_1 \in Z \setminus \{d\}$.
If $y_1 \sim z_1$,
then $G$ has a $K_{3,3}$-minor
since a path between $y_1$ and $d$ and one between $b$ and $z_1$ are disjoint
(see Figure~\ref{fig:find_minor}).

\begin{figure}[htb]
\centering
\unitlength 0.1in
\begin{picture}( 19.2500, 14.9500)( 13.1500,-21.1000)
%
\special{pn 8}%
\special{sh 1.000}%
\special{ar 1600 1000 40 40  0.0000000 6.2831853}%
%
\special{pn 8}%
\special{sh 1.000}%
\special{ar 1600 1800 40 40  0.0000000 6.2831853}%
\put(14.5000,-9.1000){\makebox(0,0){$a$}}%
%
\special{pn 8}%
\special{sh 1.000}%
\special{ar 2400 800 40 40  0.0000000 6.2831853}%
%
\special{pn 8}%
\special{sh 1.000}%
\special{ar 2400 1200 40 40  0.0000000 6.2831853}%
%
\special{pn 8}%
\special{sh 1.000}%
\special{ar 2400 1600 40 40  0.0000000 6.2831853}%
%
\special{pn 8}%
\special{sh 1.000}%
\special{ar 2400 2000 40 40  0.0000000 6.2831853}%
%
\special{pn 8}%
\special{sh 1.000}%
\special{ar 3200 1200 40 40  0.0000000 6.2831853}%
%
\special{pn 8}%
\special{sh 1.000}%
\special{ar 3200 1600 40 40  0.0000000 6.2831853}%
%
\special{pn 8}%
\special{pa 1600 1000}%
\special{pa 1600 1800}%
\special{fp}%
%
\special{pn 8}%
\special{pa 1600 1800}%
\special{pa 2400 2000}%
\special{fp}%
%
\special{pn 8}%
\special{pa 2400 1600}%
\special{pa 1600 1800}%
\special{fp}%
%
\special{pn 8}%
\special{pa 1600 1000}%
\special{pa 2400 800}%
\special{fp}%
%
\special{pn 8}%
\special{pa 1600 1000}%
\special{pa 2400 1200}%
\special{fp}%
%
\special{pn 8}%
\special{pa 2400 800}%
\special{pa 3200 1200}%
\special{fp}%
\special{pa 3200 1200}%
\special{pa 2400 2000}%
\special{fp}%
%
\special{pn 8}%
\special{pa 2400 1600}%
\special{pa 2400 1200}%
\special{fp}%
\put(14.7000,-19.1000){\makebox(0,0){$e$}}%
\put(22.3000,-21.1000){\makebox(0,0){$d$}}%
\put(22.3000,-7.0000){\makebox(0,0){$b$}}%
\put(32.9000,-11.1000){\makebox(0,0){$c$}}%
\put(22.4000,-12.9000){\makebox(0,0){$y_1$}}%
\put(22.7000,-17.6000){\makebox(0,0){$z_1$}}%
%
\special{pn 8}%
\special{pa 2400 1200}%
\special{pa 3200 1600}%
\special{fp}%
%
\special{pn 8}%
\special{pa 3200 1600}%
\special{pa 2400 2000}%
\special{fp}%
%
\special{pn 8}%
\special{pa 2400 1600}%
\special{pa 3200 1400}%
\special{fp}%
%
\special{pn 8}%
\special{pa 3200 1400}%
\special{pa 2400 800}%
\special{fp}%
%
\special{pn 8}%
\special{sh 1.000}%
\special{ar 3200 1400 40 40  0.0000000 6.2831853}%
\put(33.0000,-13.1000){\makebox(0,0){$w$}}%
\put(33.4000,-15.1000){\makebox(0,0){$w'$}}%
%
\special{pn 8}%
\special{ar 1600 1000 110 110  0.0000000 6.2831853}%
%
\special{pn 8}%
\special{ar 1600 1790 110 110  0.0000000 0.1090909}%
\special{ar 1600 1790 110 110  0.4363636 0.5454545}%
\special{ar 1600 1790 110 110  0.8727273 0.9818182}%
\special{ar 1600 1790 110 110  1.3090909 1.4181818}%
\special{ar 1600 1790 110 110  1.7454545 1.8545455}%
\special{ar 1600 1790 110 110  2.1818182 2.2909091}%
\special{ar 1600 1790 110 110  2.6181818 2.7272727}%
\special{ar 1600 1790 110 110  3.0545455 3.1636364}%
\special{ar 1600 1790 110 110  3.4909091 3.6000000}%
\special{ar 1600 1790 110 110  3.9272727 4.0363636}%
\special{ar 1600 1790 110 110  4.3636364 4.4727273}%
\special{ar 1600 1790 110 110  4.8000000 4.9090909}%
\special{ar 1600 1790 110 110  5.2363636 5.3454545}%
\special{ar 1600 1790 110 110  5.6727273 5.7818182}%
\special{ar 1600 1790 110 110  6.1090909 6.2181818}%
%
\special{pn 8}%
\special{ar 2400 800 110 110  0.0000000 0.1090909}%
\special{ar 2400 800 110 110  0.4363636 0.5454545}%
\special{ar 2400 800 110 110  0.8727273 0.9818182}%
\special{ar 2400 800 110 110  1.3090909 1.4181818}%
\special{ar 2400 800 110 110  1.7454545 1.8545455}%
\special{ar 2400 800 110 110  2.1818182 2.2909091}%
\special{ar 2400 800 110 110  2.6181818 2.7272727}%
\special{ar 2400 800 110 110  3.0545455 3.1636364}%
\special{ar 2400 800 110 110  3.4909091 3.6000000}%
\special{ar 2400 800 110 110  3.9272727 4.0363636}%
\special{ar 2400 800 110 110  4.3636364 4.4727273}%
\special{ar 2400 800 110 110  4.8000000 4.9090909}%
\special{ar 2400 800 110 110  5.2363636 5.3454545}%
\special{ar 2400 800 110 110  5.6727273 5.7818182}%
\special{ar 2400 800 110 110  6.1090909 6.2181818}%
%
\special{pn 8}%
\special{ar 2400 1200 110 110  0.0000000 0.1090909}%
\special{ar 2400 1200 110 110  0.4363636 0.5454545}%
\special{ar 2400 1200 110 110  0.8727273 0.9818182}%
\special{ar 2400 1200 110 110  1.3090909 1.4181818}%
\special{ar 2400 1200 110 110  1.7454545 1.8545455}%
\special{ar 2400 1200 110 110  2.1818182 2.2909091}%
\special{ar 2400 1200 110 110  2.6181818 2.7272727}%
\special{ar 2400 1200 110 110  3.0545455 3.1636364}%
\special{ar 2400 1200 110 110  3.4909091 3.6000000}%
\special{ar 2400 1200 110 110  3.9272727 4.0363636}%
\special{ar 2400 1200 110 110  4.3636364 4.4727273}%
\special{ar 2400 1200 110 110  4.8000000 4.9090909}%
\special{ar 2400 1200 110 110  5.2363636 5.3454545}%
\special{ar 2400 1200 110 110  5.6727273 5.7818182}%
\special{ar 2400 1200 110 110  6.1090909 6.2181818}%
%
\special{pn 8}%
\special{ar 2400 1600 110 110  0.0000000 6.2831853}%
%
\special{pn 8}%
\special{ar 2400 2000 110 110  0.0000000 6.2831853}%
\end{picture}%
\caption{The case when $y_1 \sim z_1$;
we have $w \ne w'$, and possibly either $w=c$ or $w'=c$.
Even if $w = c$ by symmetry,
we can find a $K_{3,3}$-minor since
$\{\{a\},\{c,d,w'\},\{z_1,w\}\}$ and $\{\{b\},\{e\},\{y_1\}\}$ form
the partite sets of the $K_{3,3}$
(three sets in each set are \emph{bags} of the $K_{3,3}$-minor).
}
\label{fig:find_minor}
\end{figure}

Similarly,
if a path between $y_1$ and $z_1$ and one between $b$ and $d$ are disjoint,
then it is easy to see that $G$ has a $K_{3,3}$-minor.
Therefore, 
there is a vertex $p \in V(X)$ adjacent to $y_1,b,z_1$ and $d$.

Let $q \in V(X) \setminus \{p\}$.
Assume $p \sim q$.
There are $y_2 \in Y \setminus \{b,y_1\}$ and $z_2 \in Z \setminus \{d,z_1\}$.
such that $q \sim y_2$ and $q \sim z_2$.
Then, since there is a path of length at most 2 between $y_1$ and $z_2$,
$G$ has a $K_{3,3}$-minor, a contradiction.
Thus, we assume $p \not\sim q$.
In this case,
there is a path of length at most 2 between $q$ and each of $b,y_1,d,z_1$,
where the path does not contain $p$.
Clearly, $G$ has a $K_{3,3}$-minor, a contradiction.

\bigskip
\noindent
{\bf Case (2).}
By the assumption, $|Z| \ge 2$ and $|V(X)| \ge 2$.
If $|L| \ge 1$, then this case is reduced to Case (1).
Thus, we may assume $|L|=0$. 


Since $d \sim e$ and $|Z| \ge 2$, 
there are $x \in V(X)$ and $z \in Z$  with $e \sim x$ and $z \sim x$
($x \neq c$ since $G$ is triangle-free).
Then there is a path of length at most 2 between $x$ and $b$.
First assume $x \sim b$.
There are paths $zwd$ and $zw'a$,
where $w,w' \notin \{b,d,e\}$ and possibly, $w=c$ or $w'= w \ne c$.
Thus, $G$ has a $K_{3,3}$-minor consisting of $\{a,b,c,d,e,x,z,w,w'\}$, 
a contradiction;
$\{\{a,w'\},\{c,d,w\},\{x\}\}$ and $\{\{b\},\{e\},\{z\}\}$ form
the partite sets of the $K_{3,3}$.
Hence we assume $x \not\sim b$, that is, there is a path $xwb$, possibly $w = z$.

Similarly to the above,
there are a path $zw'd$ with $w' \notin \{b,c,e,x\}$
and a path of length at most 2 between $z$ and $b$ 
(note that if $w=z$, then $z \sim b$).
Moreover, there is also a path of length at most 2 between $w'$ and $a$;
observe that the path does not contain any vertex in $\{b,c,d,e,x,z,w\}$
since $G$ is triangle-free.
Therefore, $G$ has a $K_{3,3}$-minor similarly to the above, a contradiction.

\bigskip
\noindent
{\bf Case (3).}
This case can be reduced to Case (1) or Case (2),
since there is a vertex $z \in Z$ adjacent to $d$,
and also adjacent to a vertex in $L$ or $v \in Y$ with $v \ne e$.

\bigskip

In what follows, we may assume that $K = L = \emptyset$.
Note that $|Y| \ge 2$, $|Z| \ge 2$ and $|V(X)| \ge 2$.

\begin{clm}\label{cl:reduced_cases}
Cases (i), (iv), (vi), (vii) and (viii) can be reduced to (ii), (iii) or (v).
\end{clm}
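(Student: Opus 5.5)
The idea is to re-route the given $5$-cycle $C=abcde$ so that it uses fewer vertices of $V(X)$, or a more favourable pattern of $Y$- and $Z$-vertices. We now have $K=L=\emptyset$, so every vertex of $V(X)$ has a neighbour in $Y$ and a neighbour in $Z$, by (ii) of Definition~\ref{defn:01}. Two other facts are available throughout: $Y\cup Z$ induces a bipartite graph in which $y_i\sim z_j$ iff $A_i\cap B_j=\emptyset$, and $G$ is triangle-free with diameter $2$. The basic tool is local replacement: pick a cycle vertex $v\in V(X)$ with cycle neighbours $u,w$, and replace $v$ by a vertex $t\in Y\cup Z$ adjacent to $u$ and $w$, or replace a subpath by another path of the same length through $Y$ or $Z$. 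Each time we check that the new closed walk is still a $5$-cycle and lands in one of the patterns (ii), (iii) or (v).

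We handle the cases in the following order.

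\textbf{Case (viii).} All five vertices lie in $X$. Take $a$ and a neighbour $z\in Z$ of $a$. Since $G$ is triangle-free, $z$ is adjacent to neither $b$ nor $e$. By diameter $2$, $z$ and $c$ have a common neighbour or are adjacent; we use this to form a shorter odd cycle through $z$ together with part of $C$. If $z\sim c$, the cycle $zcde a$ is a $5$-cycle with exactly one vertex in $Z$ and four in $X$, which is case (vi) up to swapping the roles of $Y$ and $Z$. If $z\not\sim c$ and $z\not\sim d$, we instead consider the common neighbour of $z$ and $c$, or apply the same argument at another vertex. So (viii) reduces to (vi).

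\textbf{Case (vi).} Here $a\in Y$ and $b,c,d,e\in X$. Choose $z\in Z$ adjacent to $c$ (or to $d$). Triangle-freeness gives $z\not\sim b$ and $z\not\sim d$. If $z\sim a$, then $azcde$ is a $5$-cycle matching (iii). Otherwise $z$ and $a$ have a common neighbour in $X$. A similar replacement then yields pattern (ii) or (iii).

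\textbf{Cases (i), (iv) and (vii).} These contain two $Y$-vertices. For (i), take $z\in Z$ adjacent to $d$ and replace $d$ or $e$ suitably to reach (v) or (ii). Case (iv) is the mirror of (v) with one $X$-vertex moved, so replacing the $X$-vertex $d$ by a $Z$-vertex adjacent to $e$ gives (v) or (vii). For (vii), $a,d\in Y$, $b,e\in Z$ and $c\in X$; using $a\sim b$ we get $A_a\cap B_b=\emptyset$. We then find an $X$-neighbour of $a$ and reroute, replacing $b$ by that vertex to obtain (v).

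The main obstacle is the bookkeeping in (viii) and (vi). Replacing a vertex may create a chord, turning the closed walk into a shorter cycle or a triangle. I would rule out triangles directly. A $4$-cycle with a pendant edge can be avoided by choosing the replacing vertex via the diameter-$2$ condition applied to the two vertices at distance $2$ on $C$. Where a single replacement fails, I would apply it at the symmetric position, using the cyclic symmetry of $C$ and the $Y\leftrightarrow Z$ symmetry, so that (ii), (iii) and (v) up to symmetry cover all outcomes.
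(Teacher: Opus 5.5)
Your overall strategy of rerouting $C$ using diameter $2$ and triangle-freeness is the right one. In (viii) and (vi) you are essentially running the paper's argument with $Y$ and $Z$ interchanged. The mechanism is: take an outside neighbour $t$ of a cycle vertex $v$, and use diameter $2$ between $t$ and a cycle vertex $u$ at distance $2$ from $v$. An edge $tu$ closes a $5$-cycle along the long arc, a path $twu$ closes one along the short arc, and triangle-freeness makes all vertices distinct. The subcase of (viii) you left open does work out: $zwcba$ is a $5$-cycle, of type (iii) if $w\in Y$ and (vi) if $w\in V(X)$. So it is not always (vi), but both outcomes are acceptable.

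The gap is in (i), (iv) and (vii), where you switch to replacing a cycle vertex $v$ by some $t\in Y\cup Z$ adjacent to both cycle-neighbours of $v$.
\begin{itemize}
\item Nothing guarantees such a $t$. Diameter $2$ only provides a common neighbour of those two vertices, and $v$ already is one. You give no fallback when $t$ does not exist.
\item In (i), the vertex you pick, a $Z$-neighbour $z$ of $d$, can never replace $d$, since $z\sim c$ would create the triangle $zcd$.
\item The targets you name are wrong. In (vii), an $X$-neighbour $x$ of $a$ with $x\sim c$ yields $axcde$ with pattern $(Y,X,X,Y,Z)$, which is (iv), not (v).
\item In (iv), a $Z$-vertex $z$ adjacent to $c$ and $e$ yields $abcze$, which is (vii).
\end{itemize}
So (iv) and (vii) merely send you to each other, and your scheme has no terminating order. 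Concretely, applying your (vii) step to $abcze$ with the $X$-neighbour $d$ of $c$ returns exactly $abcde$.

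The paper avoids this by using the attach-and-close device in every case:
\begin{itemize}
\item In (iv) it takes $x\in V(X)$ with $x\sim b$ and closes to $d$, getting $abxde$ (iii), $cbxwd$ (iii) or $bxwdc$ (v). So (iv) never returns to (vii).
\item In (vii) it takes $x\in V(X)$ with $x\sim a$ and closes to $c$, getting (iv), (iii) or (v).
\item In (i) it takes $z\in Z$ with $z\sim b$ and closes to $d$, getting (ii) or (v).
\end{itemize}
Thus (viii) and (vii) move only to (vi) and (iv) respectively or to a terminal case, while (vi), (iv) and (i) move only to terminal cases. This ordering is what makes the reduction well founded, and your proposal lacks it.
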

\begin{proof}
We find another 5-cycle in $G$ for each of cases (i), (iv), (vi), (vii) and (viii),
and then each case can be reduced to one of (ii), (iii) or (v),
as follows.
When we find another 5-cycle $C'$ 
and the case is reduced to a case ($*$),
we denote by $C'$ $\to$ ($*$).

\bigskip
\noindent
Case (i):
There is a vertex $z \in Z$ with $z \sim b$.
If $zd \in E(G)$, then $abzde$ $\to$ (ii).
Otherwise, 
there is a path $zwd$ where $w$ is in $Y$ or $V(X)$.
In the former case,
$cdwzb$ $\to$ (v),
and in the latter case,
$cbzwd$ $\to$ (ii) (see Figure~\ref{fig:case_i}).

\begin{figure}[htb]
\centering
\input{case_i}
\caption{Case (i); $zd \in E(G)$ (left), $w \in Y$ (center), $w \in V(X)$ (right)}
\label{fig:case_i}
\end{figure}

\bigskip
\noindent
Case (iv):
There is a vertex $x \in V(X)$ with $x \sim b$.
Then there is an edge $xd$ or a path $xwd$.
In the former case, $abxde \to$ (iii),
and in the latter case,
$cbxwd \to$ (iii) if $w \in V(X)$ or $bxwdc \to$ (v) if otherwise.

\bigskip
\noindent
Case (vi):
There is a vertex $z \in Z$ with $z \sim c$.
Then there is an edge $za$ or a path $zwa$ (note that $w \in V(X)$).
In the former case,
$azcde \to$ (iii),
and in the latter case,
$awzcb \to$ (ii).

\bigskip
\noindent
Case (vii):
There is $x \in V(X)$ with $x \sim a$ (since $|V(X)| \ge 2$).
If $xc \in E(G)$ or there is a path $xwc$,
then $aedcx \to$ (iv)
or $abcwx \to$ (iii) if $w \in V(X)$ 
(or $axwcb \to$ (v) if $w \in Y \cup Z$),
respectively.

\bigskip
\noindent
Case (viii): 
There is $y \in Y$ with $y \sim a$.
Then we have either $cy \in E(G)$, 
$zy,cz \in E(G)$ for some $z \in Z$
or $xy,cx \in E(G)$ for some $x \in V(X)$.
Then (first) $ycdea$ and (third) $yxcba \to$ (vii),
and (second) $yzcba \to$ (iii).

\bigskip

Therefore, each case is ultimately reduced to (ii), (iii) or (v).
\end{proof}

By Claim~\ref{cl:reduced_cases},
it suffices to consider three cases (ii), (iii) and (v).

\bigskip
\noindent
{\bf Proof of Case (ii).}
If each vertex in $Y \cup Z$ is adjacent to only $\{b,d\}$ or $\{b,e\}$,
then $G$ is isomorphic to a blow-up $C_5$, a contradiction.
Thus, since there is a cover containing $d$ in $\mathbb{A}_r$,
there is a vertex $q \in Y \setminus \{a\}$ satisfying at least one of the following by symmetry:

\begin{itemize}
\item[(C1)] $q$ is adjacent to $b,d$ and another vertex $x\in V(X)$.
\item[(C2)] $q \sim d$ but $q \not\sim b,e$
\end{itemize}

\smallskip
\noindent
{\bf (C1)} Observe that $a \sim x$ or there is a path $awx$ 
and that $c \sim x$ or there is a path $cw'x$,
where $w \notin b,d$ and $w' \notin b,d,e$ since $G$ is triangle-free
(in what follows, we do not refer to this each time).
Therefore, $G$ has a $K_{3,3}$-minor consisting of 
$\{a,b,c,d,e,x,q,w,w'\}$, a contradiction.

\bigskip
\noindent
{\bf (C2)} There are paths $awq$ and $bw'q$,
where $w \notin \{b,c,d,e\}$ and $w' \notin \{a,c,d,e,w\}$.
Moreover,
$c \sim w$ or 
there is a path $cw''w$ with $w'' \notin \{a,b,d,e,w'\}$.
Similarly to the above,
$G$ has a $K_{3,3}$-minor consisting of $\{a,b,c,d,e,q,w,w',w''\}$, a contradiction.

\bigskip
\noindent
{\bf Proof of Case (iii).}
Let $y \in Y$ be adjacent to $d$.
If there is a path $ywb$ ($w \in V(X)$),
then $ywbcd \to$ (ii).
Thus, $y \sim b$.
Similarly, 
a vertex $z \in Z$ with $z \sim d$ is adjacent to $a$.
Let $y' \in Y$ be adjacent to $c$.
Observe that 
there is a path $y'wy$ ($w \notin \{a,b,c,d,e,z\}$),
and that there is an edge $y'z$ or a path $y'w'z$ 
($w' \notin \{a,b,c,d,e,y,w\}$).
Therefore,
$G$ has a $K_{3,3}$-minor consisting of $\{a,b,c,d,e,y,z,y',w,w'\}$,
a contradiction.

\bigskip
\noindent
{\bf Proof of Case (v).}
If $a$ is not adjacent to any vertex other than $b,e$,
then $cdeab$ $\to$ (iii).
Then we may assume that 
$a$ is adjacent to $z \in Z \setminus \{d\}$ or $x \in V(X) \setminus \{b,e\}$.

First assume the former, that is, $a \sim z$ with $z \in Z \setminus \{d\}$,
and thus 
there is $p \in V(X)$ with $p \sim z$ (note that $p \ne b,e$).
Assume $c \sim p$.
Then there is an edge $pe$ or a path $pwe$ ($w \notin \{a,b,c,d,z\}$),
and there is an edge $zd$ or a path $zw'd$ ($w' \notin \{a,b,c,d,e,p,w\}$).
Therefore, 
$G$ has a $K_{3,3}$-minor consisting of $\{a,b,c,d,e,z,p,w,w'\}$,
a contradiction.
Thus $c \not\sim p$.
In this case, 
there is a path of length at most 2 between $p$ and $b$,
and also one between $p$ and $d$
(note that the intermediate vertex in those paths may be coincide).
Moreover, 
there is an edge or a path of length at most 2 between $z$ and $c$ 
that is disjoint from the above paths.
Thus, we have a similar contradiction.

Therefore, we assume the latter, that is, 
$a \sim x$ with $x \in V(X) \setminus \{b,e\}$,
and we may assume that no vertex in $Z$ is adjacent to $a$.
Assume $c \not\sim x$ and $d \not\sim x$.
In this case,
there are a path $xwc$ with $w \notin \{ a,b,e \}$
and a path $xw'd$ with $w' \notin \{ a,b,e,w \}$.
Then, 
by considering a path between $w$ and $e$ and one between $w'$ and $b$,
we have a contradiction similarly to the previous proofs.
Thus, we may assume $c \sim x$ by symmetry.
Let $z \in Z \setminus \{d\}$ with $z \sim x$.
Now there is a path $zwd$ (possibly $w = e$).
Moreover, 
there is a path $zw'b$ with $w' \notin \{ a,c,d,e,x \}$ (possibly, $w' = w \ne e$),
and hence, $G$ has a $K_{3,3}$-minor, a contradiction.

\bigskip
In each case,
$G$ is isomorphic to a complete bipartite graph or a blow-up $C_5$,
and hence the theorem holds.
\qed


\section{Chromatic number of triangle-free 2-self-centered graphs}\label{sec:color}

We first prove the following proposition.
This is already verified (by Mycielskian), 
but our proof uses another construction.

\begin{prop}\label{prop:chro_exist}
For any integer $p \ge 2$,
there is a triangle-free $2$-self-centered graph
with chromatic number $p$.
\end{prop}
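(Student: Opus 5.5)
We will prove the statement by induction on $p$, using the characterization in Theorem~\ref{thm:ch} (more precisely, the sufficiency direction, Proposition~\ref{prop:01}) rather than Mycielski's construction. For $p=2$ we take $K_{2,2}=C_4$, which is triangle-free and $2$-self-centered by Remark~\ref{rem:bip}. For $p=3$ we take $C_5$ (or the Petersen graph, which the paper exhibits as $GCB_X(1,1,\mathbb{A}_2,\mathbb{B}_2)$). For the induction step we aim at the following: given a triangle-free $2$-self-centered graph $H$ with $\chi(H)=p$, we build a graph $G=GCB_X(k,\ell,\mathbb{A}_r,\mathbb{B}_s)$ with $X=H$ and $\chi(G)=p+1$.

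\textbf{Construction.} We put $X=H$ and choose the covers so that the SBIC axioms are trivially satisfied.
\begin{itemize}
\item We let $\mathbb{A}$ consist of all closed-neighbourhood-free independent sets of the form $N_H(v)$, $v\in V(H)$. These are independent because $H$ is triangle-free, and they cover $V(H)$ because $H$ has no isolated vertices.
\item We let $\mathbb{B}$ consist of a single colour class family, namely the colour classes $B_1,\dots,B_p$ of an optimal proper colouring of $H$.
\end{itemize}
Condition (iii) is vacuous, since $H$ has diameter $2$. Condition (iv) says: if $u$ has no neighbour in $N_H(v)$, then some $B_j\ni u$ is disjoint from $N_H(v)$. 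Condition (v) is the symmetric statement. These are the conditions to check. We take $k,\ell\ge 1$, so that the special cases (7)--(9) do not arise and (11) is harmless.

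\textbf{Main obstacle.} The main difficulty is making (iv) and (v) hold, and this may force a change of covers. A convenient alternative is to take $\mathbb{B}$ to also be $\{N_H(v)\}$. Then (iv) asks for a $w$ with $u\in N(w)$ and $N(v)\cap N(w)=\emptyset$, that is, $d(v,w)\neq 2$. If $d(u,v)=2$, we can take $w=v$ itself: then $u\in N(w)$ fails, so instead we pick a neighbour $w$ of $u$ adjacent to $v$. This does not work directly either. I will therefore first try the simplest variant, where $\mathbb{A}$ and $\mathbb{B}$ are both the singleton families $\{\{x\}: x\in V(H)\}$. In that case $d(u,\{x\})\ge 2$ means $u\ne x$ and $u\not\sim x$, and (iv) requires $\{x\}\cap\{u\}=\emptyset$ together with $u\in\{u\}$. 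Both hold, and (iii) is vacuous.

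The resulting $G$ has each vertex $x$ of $H$ attached to its own pendant-type vertices $y_x$ and $z_x$. By (6), $y_x\sim z_{x'}$ for $x\ne x'$, and $K,L$ are joined as prescribed. Proposition~\ref{prop:01} then gives that $G$ is triangle-free and $2$-self-centered.

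\textbf{Chromatic number.} We have $\chi(G)\ge\chi(H)=p$ and $\chi(G)\le p+2$. It remains to pin down the value. The subgraph induced by $H\cup Y\cup Z$ resembles a doubled Mycielski-type shadow, and I expect $\chi(G)=p+1$ with a Mycielski-style argument. For the upper bound, we colour $y_x$ and $z_x$ with the colour of $x$ shifted cyclically, $K$ with a new colour, and $L$ accordingly. The lower bound, proving that $p$ colours do not suffice, is the real obstacle. If it fails, I will adjust $k,\ell$ or replace one cover by colour classes, since colouring $y_i$ with the colour of $A_i$ is what yields a $p$-colouring. This step is where I expect the most work.
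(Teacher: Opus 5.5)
Your construction itself is sound: with $\mathbb{A}=\mathbb{B}=\{\{x\}: x\in V(H)\}$ the SBIC axioms hold, with (iii) vacuous because $H$ has diameter $2$, and Proposition~\ref{prop:01} shows that $G$ is triangle-free and $2$-self-centered. The gap is the induction step. The value $\chi(G)=p+1$ you are aiming for is false as soon as $p\ge 4$, so the lower bound you flag as the real obstacle cannot be proved.

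To see this, take a proper $p$-colouring $c'$ of $H$ and keep it on $V(H)$. Set $c(y_x)=3$ if $c'(x)=4$ and $c(y_x)=4$ otherwise. Set $c(z_x)=1$ if $c'(x)=2$ and $c(z_x)=2$ otherwise. Colour every vertex of $K$ with $1$ and every vertex of $L$ with $3$. Then:
\begin{itemize}
\item each $y_x$ and $z_x$ avoids the colour of its only neighbour $x$ in $H$;
\item the colours on $Y$ lie in $\{3,4\}$, while those on $Z\cup K$ lie in $\{1,2\}$;
\item $L$, coloured $3$, sees only $K\cup Z$.
\end{itemize}
Hence $\chi(G)\le\max\{p,4\}$, so $\chi(G)=p$ for every $p\ge 4$. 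This colouring does not depend on $k$ or $\ell$, so adjusting them cannot help. Your step does raise $\chi$ for $p=2,3$ (from $C_5$ one really gets $\chi(G)=4$), but the induction stalls at $p=4$.

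Separately, the upper-bound colouring you sketch is not proper as described. If $y_x$ and $z_{x'}$ both receive the shifted colour of $x$ and of $x'$ respectively, then any $x\neq x'$ in the same colour class give adjacent vertices $y_x\sim z_{x'}$ (by (6)) with equal colours.

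What you have found is in fact the paper's proof, used in the opposite direction. The paper does not try to manufacture chromatic number with the $GCB$ construction. Instead it imports a triangle-free $X$ with $\chi(X)=p$, e.g.\ a Mycielskian, so an external source of such graphs is needed anyway. It takes $\mathbb{B}$ to be the singletons, and $\mathbb{A}$ to be the singletons together with all pairs at distance at least $3$; the pairs are needed for (iii), since such an $X$ need not have diameter $2$. It then colours $Y,Z,K,L$ within the existing palette exactly as above, with the pair-vertices of $Y$ taking a spare colour from $\{3,4,5\}$. That is why it needs $p\ge 5$ and handles $p=2,3,4$ separately by $K_{2,2}$, $C_5$ and the Gr\"{o}tzsch graph.

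If you want to keep an induction that raises $\chi$ by one, you need a step that genuinely forces a new colour. The natural choice is Mycielski's construction, which preserves triangle-freeness and diameter $2$ and increases $\chi$ by one. A triangle-free graph of diameter $2$ that is not a star is automatically $2$-self-centered.
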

\begin{proof}
If $p \in \{2,3,4\}$, 
there is a desired graph as follows:
\begin{description}
\item[$p=2$] : $K_{a,b}$ with $a,b \ge 2$ (by Remark~\ref{rem:bip})
\item[$p=3$] : $C_5$ 
\item[$p=4$] : Gr\"{o}tzsch graph
\end{description}

Thus, we assume $p \ge 5$,
and we construct a triangle-free $2$-self-centered graph 
$G = GCB_X(k, \ell, \mathbb{A}_r, \mathbb{B}_s)$ as follows.

Let $X$ be a triangle-free graph with chromatic number $p \ge 5$
(note that such a graph exists, e.g., Mycielskian).
Let $\mathcal{A} = \{A_1,A_2,\dots,A_m\}$ 
be the set of pairs of two vertices $u,v \in V(X)$ 
with ${\rm dist}(u,v) \ge 3$.
Then we define two families 
$\mathbb{A}_r = \mathcal{A} \cup \mathbb{A}'$ and $\mathbb{B}_s$ as follows:
\begin{itemize}
\item[$\mathbb{A}_r$:]
We define $\mathbb{A}' = \{A_{m+1},A_{m+2},\dots,A_{r = m+|V(X)|}\}$
such that
for each vertex $v \in V(X)$,
there is exactly one element $A_i \in \mathbb{A}'$ such that $A_i = \{v\}$.

\item[$\mathbb{B}_s$:]
We define $\mathbb{B}_s = \{B_1,B_2,\dots,B_{s= |V(X)|}\}$
such that $B_i = A_{m+i}$ for each $i = 1,\dots,|V(X)|$
(that is, $\mathbb{B}_s = \mathbb{A}'$).

\end{itemize}

Observe that $(\mathbb{A}_r, \mathbb{B}_s)$ is an SBIC of $X$:
Conditions (i) and (ii) trivially hold and (iii) holds by $\mathcal{A}$.
Since for each vertex $v$, 
there are subsets $A_i \in \mathbb{A}'$ and $B_j \in \mathbb{B}_s$
each of which contains only $v$,
(iv) and (v) also hold.

Finally, we suitably construct non-empty subsets $K,L,Y,Z$ 
to make the resultant graph $G$ be $GCB_X(k, \ell, \mathbb{A}_r, \mathbb{B}_s)$.
Therefore, in the rest of the proof,
we have only to prove that 
$G$ has a proper $p$-coloring.

Let $y_i \in Y$ (resp., $z_i \in Z$) be 
a vertex adjacent to vertices in $A_i$ (resp., $B_i$).
Let $c' : V(X) \to \{1,\dots,p\}$ be a proper $p$-coloring of $X$. 
We define a $p$-coloring $c$ of $G$ by the following rule.

\begin{enumerate}
\item $v \in V(X)$ $\Rightarrow$ $c(v) = c'(v)$.

\item $y_i \in Y$ for $i = 1,\dots,m$ 
$\Rightarrow$ $c(y_i) \in \{3,4,5\} \setminus \{c(u),c(v)\}$ where $u,v \in D_m$.  

\item $y_i \in Y$ for $i = m+1,\dots,m+|V(X)|=r$ 
$\Rightarrow$ $c(y_i) = \left\{
\begin{array}{ll}
3 &\: \mbox{if $c(u)=4$ for $u \in A_i$,} \\
4 &\: \mbox{otherwise.} 
\end{array}
\right.
$

\item $z_i \in Z$ for $i = 1,\dots,|V(X)|=s$ 
$\Rightarrow$ $c(z_i) = \left\{
\begin{array}{ll}
1 &\: \mbox{if $c(u)=2$ for $u \in B_i$,} \\
2 &\: \mbox{otherwise.} 
\end{array}
\right.
$

\item $v \in K$ $\Rightarrow$ $c(v) = 1$.
\item $v \in L$ $\Rightarrow$ $c(v) = 3$. 
\end{enumerate}

It is easy to check that the $p$-coloring $c$ is proper.
Therefore, the proposition holds.
\end{proof}

Now we prove Theorem~\ref{prop:color1}.

\begin{proof}[Proof of Theorem~$\ref{prop:color1}$]
We first assume $\chi(X) = p = 2$
Suppose $X = K_2$ where $V(X) = \{a,b\}$.
We define an SBIC via $(\mathbb{A}_2,\mathbb{B}_2)$ 
in which $\mathbb{A}_2 = \{A_1,A_2\} = \mathbb{B}_2$,
$A_1 = \{a\}$ and $A_2 = \{b\}$.
It is easy to see that 
the graph $G = GCB_X(1, 1, \mathbb{A}_2, \mathbb{B}_2)$ defined as above
has chromatic number~$\chi(X)+1 = 3$ as shown in Figure~\ref{fig:graph_ch3}.

\begin{figure}[htb]
\centering
\input{3col_of_p2}
\caption{A 3-coloring of the graph $G = GCB_X(1, 1, \mathbb{A}_2, \mathbb{B}_2)$}
\label{fig:graph_ch3}
\end{figure}

Next we assume $X = 2K_2$ where $V(X) = \{a,b,c,d\}$ and $ab, cd \in E(X)$.
We define an SBIC via $(\mathbb{A}_2,\mathbb{B}_2)$ 
in which $\mathbb{A}_4 = \{A_1,A_2,A_3,A_4\} = \mathbb{B}_4$,
$A_1 = \{a,c\}$
$A_2 = \{a,d\}$,
$A_3 = \{b,c\}$ and
$A_4 = \{b,d\}$.
We show that 
the graph $G = GCB_X(1, 1, \mathbb{A}_4, \mathbb{B}_4)$ defined as above 
has chromatic number~$\chi(X)+2 = 4$.
If $X$ is colored by only colors 1 and 2,
then there are two adjacent vertices $y \in Y$ and $z \in Z$
where both cannot be colored by color 1 and 2. 
Otherwise, without loss of generality, 
if $a,c$ are colored by color 1,
$b$ by 2 and $d$ by 3,
then $y_2,y_3,y_4 \in Y$ (corresponding to $A_2,A_3,A_4$) are colored by
color 2, 3, 1, respectively.
This means that the fourth color is required for a vertex in $K$.
Therefore, there is a triangle-free $2$-self-centered graph $G$
with $\chi(G)= p+q= 2 + q$ for each $q \in \{1,2\}$.

We now assume $\chi(X) = p \ge 3$.
By the construction in Proposition~\ref{prop:chro_exist},
we only have to construct a triangle-free $2$-self-centered graph 
for the following pairs $(p,q)$:
\begin{enumerate}
\item[(i)] $(3,0)$, $(4,0)$.
\item[(ii)] $(p,q)$ with $p \geq 3$ and $q \in \{1,2\}$.
\end{enumerate}

\bigskip
\noindent
{\bf (i)}
We first assume $p = 3$.
Let $X = C_5 = u_0u_1u_2u_3u_4$
and let $\mathbb{A}_3 = \mathbb{B}_3 = \{A_1,A_2,A_3\}$
where $A_1 = \{u_0,u_2\}$, $A_2 = \{u_1,u_3\}$ and $A_3 = \{u_2,u_4\}$.
Then we obtain $G = GCB_X(1, 1, \mathbb{A}_3, \mathbb{B}_3)$.
Let $y_i$ (resp., $z_i$) be the vertex in $Y$ (resp., $Z$) 
corresponding to $A_i$;
note that $y_1z_3, y_3z_1, y_jz_j \notin E(G)$ for each $j = 1,2,3$.
Then we have a 3-coloring $c$ of $G$ as shown in Figure~\ref{fig:case_k3}.

\begin{figure}[htb]
\centering
\input{case_k3}
\caption{A coloring of $G$ in the case $p=3$} 
\label{fig:case_k3}
\end{figure}


Next assume $p=4$.
Let $X$ be the Gr\"{o}tzsch graph 
(which is a triangle-free 2-self-centered graph with chromatic number 4),
and $c' : V(X) \to \{1,2,3,4\}$
be a proper 4-coloring of $X$.
Let $\mathbb{A}_4 = \mathbb{B}_4 = \{A_1,A_2,A_3,A_4\}$
where $A_i$ $(i = 1,2,3,4)$ denotes the set of vertices $v$ of $X$ with $c'(v)=i$.
Then we obtain $G = GCB_X(1, 1, \mathbb{A}_4, \mathbb{B}_4)$.
Let $y_i$ (resp., $z_i$) be the vertex in $Y$ (resp., $Z$) 
corresponding to $A_i$.
Then we have a 4-coloring $c$ of $G$ as follows.
\begin{align*}
c(v)=
\left\{
\begin{array}{ll}
c'(v) &\quad \mbox{if $v \in V(X)$} \\
1  &\quad \mbox{if $v = y_2$ or $v \in L$} \\
2  &\quad \mbox{if $v \in \{y_1,y_3,y_4\}$}\\
3  &\quad \mbox{if $v = z_4$ or $v \in K$} \\
4  &\quad \mbox{if $v \in \{z_1,z_2,z_3\}$}
\end{array}
\right.
\end{align*}

\bigskip
\noindent
{\bf (ii)}
Let $H$ be a triangle-free graph $H$ with chromatic number $p$.
We assume that 
$X$ consists of $p-2+q$ disjoint union of $H$, denoted by $H_1,\dots,H_{p-2+q}$.
We construct an SBIC via $(\mathbb{A}_r,\mathbb{B}_r)$ such that 
each element of $\mathbb{A}_r = \mathbb{B}_r$ 
contains exactly one vertex in $V(H_i)$ for each $i \in \{1,\dots,p-2+q\}$
and $|\mathbb{A}_r|=|V(H_1) \times V(H_2) \times \cdots \times V(H_{p-2+q})|$.
Since each element in $\mathbb{A}_r$ is an independent set of $X$.
we obtain $G = GCB_X(1, 1, \mathbb{A}_r, \mathbb{B}_r)$.
Let $c' : V(X) \to \{1,...,p\}$ be a proper $p$-coloring of $X$;
note that all $p$ colors appear on vertices of $H_i$ for each $i$.
Thus, we easily see that $\chi(G) \ge p + q$;
when $q=1$ (resp., $q=2$), 
if we color only $p$ (resp., $p+1$) colors of $V(G) \setminus (K \cup L)$,
all $p$ (reps., $p+1$) colors must appear in $Y$,
which means that $K$ requires one more color.

We can construct a $(p+q)$-coloring $c$ of $G$ as follows ($r \in \{1,\dots,p-q+1\}$).
\begin{align*}
c(v)=
\left\{
\begin{array}{ll}
c'(v) &\quad \mbox{if $v \in V(X)$} \\
r  &\quad \mbox{if $v \in Y$ is not adjacent to a vertex $u \in V(X)$ with $c'(u)=r$} \\
p+q  &\quad \mbox{if $v \in K \cup Z$} \\
1  &\quad \mbox{if $v \in L$} \\
\end{array}
\right.
\end{align*}
Therefore, there is a triangle-free $2$-self-centered graph $G$
with $\chi(G)= p+q$ for each pair of $p \ge 2$ and $q \in \{0,1,2\}$,
unless $p=2$ and $q=0$.
\end{proof}

\begin{rem}\label{rem:03}
If $\chi(X) = 2$, i.e., $X$ has an edge,
then there is no triangle-free $2$-self-centered graph 
$G = GCB_X(k, \ell, \mathbb{A}_r, \mathbb{B}_s)$ with $\chi(G)=2$.
(Observe that if $\chi(X) = 0$, i.e., $V(X)=\emptyset$, 
then $\chi(G)=2$ by (10) in Definition~\ref{defn:02},
and that if $\chi(X) = 1$, then $\chi(G) \in \{2,3\}$.)
Let $uv$ be an edge of $X$
and let $A_u$ and $A_v$ be independent sets in $\mathbb{A}_r$ containing $u$ and $v$,
respectively.
Then there are two vertices $y_u$ and $y_v$ in $Y$ correponding to $A_u$ and $A_v$, respectively.
Since $y_u$ and $y_v$ are not adjacent,
there is a vertex $x \neq u,v$ in $X$ adjacent to both $y_u$ and $y_v$.
Thus there is a 5-cycle $uy_uxy_vv$, and hence, $\chi(G) \ge 3$.
\end{rem}

In the end of this paper, we prove Theorem~\ref{thm:chimagu}.

\begin{proof}[Proof of Theorem~$\ref{thm:chimagu}$]
Let $S = \{u_1,u_2,\dots,u_{g}\}$ be a minimum cut set of $G$.
We color $u_i$ by color $i$ for each $i = 1,2,\dots,g$.
Let $A_i = N_G(u_i) \setminus S$.
Without loss of generality, 
$A_g$ and $A_1$ belong to distinct components in $G - S$.
Since the diameter of $G$ is exactly 2,
every vertex in $V(G) \setminus S$ belongs to some set $A_i$.
We color any vertex $v \in V(G) \setminus S$ by color $c^v_j+1$
where $c^v_j = \max \{j \in \{1,\dots,g\} : v \in A_j\}$.
This coloring is a proper coloring of $G$;
since otherwise, that is, 
if there are vertices $x,y \in V(G) \setminus S$ with the same color,
then $x,y \in A_{c^x_j}$,
that is, $x \not\sim y$ by that $G$ is triangle-free.
Furthermore, the bound is best possible;
Mycielskian obtained from $C_5$ 
by applying Mycielski's construction step $t \ge 0$ times 
is $(2+t)$-connected and its chromatic number is $3+t$ by Theorem~\ref{thm:myciel_conn}.
\end{proof}

\end{document}